\newtheorem{theorem}{Theorem}%[section]
\newtheorem{lemma}[theorem]{Lemma}
\newtheorem{corollary}[theorem]{Corollary}
\newtheorem*{question}{Open Question}
\newtheorem{conjecture}[theorem]{Conjecture}
\newtheorem{proposition}[theorem]{Proposition}
\newtheorem{example}[theorem]{Example}
\DeclareMathOperator{\Aut}{Aut} \DeclareMathOperator{\fix}{fix}
 \DeclareMathOperator{\orb}{orb}
\DeclareMathOperator{\stab}{stab} 
\DeclareMathOperator{\Inf}{Inf} 
\title{Fixing Numbers of Graphs and Groups}
\author{Courtney R. Gibbons \\
\small University of Nebraska -- Lincoln \\[-0.8ex]
\small Department of Mathematics \\[-0.8ex]
\small 228 Avery Hall \\[-0.8ex]
\small PO Box 880130 \\[-0.8ex]
\small Lincoln, NE 68588-0130 \\[-0.8ex]
\small \texttt{s-cgibbon5@math.unl.edu} \and
Joshua D. Laison \\[-0.8ex]
\small Mathematics Department \\[-0.8ex]
\small Willamette University \\[-0.8ex]
\small 900 State St. \\[-0.8ex]
\small Salem, OR 97301 \\[-0.8ex]
\small \texttt{jlaison@willamette.edu}}
\date{\dateline{September 2006}{March 2009}\\
\small Mathematics Subject Classification: 05C25}
\begin{document}
\maketitle

\begin{abstract}
The fixing number of a graph $G$ is the smallest cardinality of a
set of vertices $S$ such that only the trivial automorphism of $G$
fixes every vertex in $S$.  The fixing set of a group $\Gamma$ is
the set of all fixing numbers of finite graphs with automorphism
group $\Gamma$. Several authors have studied the distinguishing
number of a graph, the smallest number of labels needed to label
$G$ so that the automorphism group of the labeled graph is
trivial.  The fixing number can be thought of as a variation of
the distinguishing number in which every label may be used only
once, and not every vertex need be labeled.  We characterize the
fixing sets of finite abelian groups, and investigate the fixing
sets of symmetric groups.
\end{abstract}

%Suppose that $\Gamma$ is a group and $X$ is a set.  We say that
%$G$ \textit{\textbf{acts}} on $X$, or that $(\Gamma,X)$ is a
%\textit{\textbf{permutation group}}, if there exists a function
%$\varphi: \Gamma \times X \to X$ satisfying
%\begin{itemize}
%\item $\varphi(1,x)=x$ for all $x \in X$.
%
%\item $\varphi(g_1g_2,x)=\varphi(g_1, \varphi(g_2,x))$
%for all $g_1,g_2 \in \Gamma$ and $x \in X$.
%\end{itemize}
%We write $g \cdot x$ to mean $\varphi(g,x)$ when the
%function $\varphi$ is clear from context.  The action of $\Gamma$ on
%$X$ is \textit{\textbf{faithful}} if ...
%
%Suppose that $G$ is a graph, and $\Gamma$ is a group acting on
%$V(G)$.  We say that the action of $\Gamma$ on $G$ is
%\textit{\textbf{edge-preserving}} if $x \sim y$ if and only if
%$g \cdot x \sim g \cdot y$, for all $g \in \Gamma$
%and all $x,y \in G$.
%
%\begin{lemma}
%$\Gamma$ is isomorphic to a subgroup of $\Aut(G)$ if and only if
%there exists a faithful edge-preserving group action of $\Gamma$
%on $G$.
%\end{lemma}

\section{Introduction}
%This research project was motivated by a topic in algebraic
%graph theory called distinguishing.  The classic distinguishing
%question is as follows:
%
%\begin{quote}Professor X, who is blind, keeps keys on a circular key ring.
%Suppose there are a variety of handle shapes available that can be
%distinguished by touch.  Assume that all keys are symmetrical so
%that a rotation of the key ring about an axis is undetectable from
%an examination of a single key.  How many shapes does Professor X
%need to use in order to keep $n$ keys on the ring and still be
%able to select the proper key by feel? \cite{Albertson96}
%\end{quote}
%
%This is really a question about labeling a cycle on $n$ vertices
%in such a way as to distinguish the vertices from one another
%under any of the automorphisms associated with the cycle. In
%general, graphs with any automorphism group can be distinguished
%in a similar way.  In a distinguishing problem, each vertex must
%be labeled; the labels may be repeated, but no vertex may remain
%unlabeled. Formally, Albertson and Collins define a labeling of a
%graph $G$ to be

In this paper we investigate breaking the symmetries of a finite
graph $G$ by labeling its vertices.  There are two standard
techniques to do this.  The first is to label all of the vertices
of $G$ with $k$ distinct labels.  A labeling is
\textbf{\emph{distinguishing}} if no non-trivial automorphism of
$G$ preserves the vertex labels.  The \textbf{\emph{distinguishing
number}} of $G$ is the minimum number of labels used in any
distinguishing labeling \cite{Albertson96, Tymoczko04}.  The
\textbf{\emph{distinguishing chromatic number}} of $G$ is the
minimum number of labels used in any distinguishing labeling which
is also a proper coloring of $G$ \cite{Collins06}.

The second technique is to label a subset of $k$ vertices of $G$
with $k$ distinct labels.  The remaining labels can be thought of
as having the null label.  We say that a labeling of $G$ is
\textit{fixing} if no non-trivial automorphism of $G$ preserves
the vertex labels, and the \textit{fixing number} of $G$ is the
minimum number of labels used in any fixing labeling.

%In a ``fixed" graph, labels are distinct and not all vertices must
%be labeled; a vertex must have a unique label or none at all. A
%labeled vertex cannot permute with any other vertices. Informally,
%we call a graph labeling ``fixing" if any non-trivial automorphism
%requires permuting a labeled vertex. The ``fixing number" of the
%graph is the minimal number of vertices labeled in all fixing
%labelings. Intuitively, instead of making vertices distinguishable
%despite automorphisms, fixing reduces the automorphism group of a
%graph to the trivial group. In short, a fixed graph has no valid
%vertex permutations. The first reference to fixing appears in the
%unpublished paper \emph{Destroying automorphisms by fixing nodes}
%by David Erwin and Frank Harary in 2004 \cite{Erwin04}. Current
%work on fixing is being done by Debra Boutin, who uses the term
%``determining" \cite{Boutin06}.  Josh Laison and Karen Collins are
%also currently working on fixing, particularly fixing Kneser
%graphs \cite{Collins05}.

\section{Fixing Graphs}

More formally, suppose that $G$ is a finite graph and $v$ is a
vertex of $G$. The \textbf{\emph{stabilizer}} of $v$, $\stab(v)$,
is the set of group elements $\{g \in \Aut(G) \,|\, g(v)=v\}$. The
\textbf{(\emph{vertex}) \emph{stabilizer}} of a set of vertices $S
\subseteq V(G)$ is $\stab(S)=\{g \in \Aut(G) \,|\, g(v)=v \text{
for all } v \in S\}$.  A vertex $v$ is \textbf{\emph{fixed}} by a
group element $g \in \Aut(G)$ if $g \in \stab(v)$.  A set of
vertices $S \subseteq V(G)$ is a \textbf{\emph{fixing set}} of $G$
if $\stab(S)$ is trivial.  In this case we say that $S$
\textit{\textbf{fixes}} $G$. The \textbf{\emph{fixing number}}
$\fix(G)$ of a graph $G$ is the smallest cardinality of a fixing
set of $G$ \cite{Boutin06, Collins05, Erwin04}.

Equivalently, $S$ is a fixing set of the graph $G$ if whenever $g
\in \Aut(G)$ fixes every vertex in $S$, $g$ is the identity
automorphism.  A set of vertices $S$ is a
\textbf{\emph{determining set}} of $G$ if whenever two
automorphisms $g,h \in \Aut(G)$ agree on $S$, then they agree on
$G$, i.e., they are the same automorphism \cite{Boutin06}.  The
following lemma shows that these two definitions are equivalent.

\begin{lemma}
A set of vertices is a fixing set if and only if it is a
determining set.
\end{lemma}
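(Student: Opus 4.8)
The plan is to prove both implications by directly unwinding the definitions, using the fact that $\Aut(G)$ is a group, so that automorphisms may be composed and inverted and the identity automorphism $\mathrm{id}$ plays the role of the trivial element.

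For the easier direction, suppose $S$ is a determining set and let $g \in \Aut(G)$ fix every vertex of $S$. Then $g$ and $\mathrm{id}$ agree on $S$, since both map each $v \in S$ to itself. Because $S$ is determining, $g = \mathrm{id}$, which is precisely the statement that $S$ is a fixing set (equivalently, $\stab(S)$ is trivial).

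For the converse, suppose $S$ is a fixing set and let $g, h \in \Aut(G)$ agree on $S$. The key step is to consider the automorphism $h^{-1}g$: for each $v \in S$ we have $h^{-1}g(v) = h^{-1}(h(v)) = v$, so $h^{-1}g$ fixes every vertex of $S$. Since $S$ is a fixing set, $h^{-1}g = \mathrm{id}$, hence $g = h$, and $S$ is a determining set.

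I do not expect a real obstacle here; the only point requiring a little care is that the argument genuinely relies on closure of $\Aut(G)$ under composition and inverses, and on reading ``$v$ is fixed by $g$'' as ``$g$ and $\mathrm{id}$ agree at $v$.'' If desired, one could add a remark that this equivalence is what makes $\stab(S)$ the natural object of study, since $S$ fixes $G$ exactly when $\stab(S) = \{\mathrm{id}\}$.
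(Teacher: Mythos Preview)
Your proof is correct and follows essentially the same approach as the paper: compare $g$ with the identity for one direction, and consider $h^{-1}g$ (the paper uses $g^{-1}h$) for the other. The arguments are identical in substance.
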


\begin{proof}
Suppose that $S$ is a determining set.  Since the identity
automorphism $e$ fixes every vertex in $S$, then by the definition
of a determining set, every other element $g \in \Aut(G)$ that
fixes every vertex in $S$ must be the identity. Therefore $S$ is a
fixing set.  Conversely, suppose that $S$ is a fixing set. Let $g$
and $h$ agree on $S$.  Then $g^{-1}h$ must fix every element in
$S$.  Hence by the definition of a fixing set, $g^{-1}h = e$, so
$g=h$.  Therefore $S$ is a determining set.
\end{proof}

Suppose $G$ is a graph with $n$ vertices.  Since fixing all but
one vertex of $G$ necessarily fixes the remaining vertex, we must
have $\fix(G) \leq n-1$.  In fact, suppose that any $n-2$ vertices
have been fixed in $G$, yet $G$ still has a non-trivial
automorphism. Then this automorphism must be the transposition of
the remaining two vertices. This implies that the only graphs
which have $\fix(G)=n-1$ are the complete graphs and the empty
graphs. On the other hand, the graphs with $\fix(G)=0$ are the
\textit{\textbf{rigid graphs}} \cite{Albertson96}, which have
trivial automorphism group. In fact, almost all graphs are rigid
\cite{Beineke97}, so most graphs have fixing number 0.

The \textbf{\emph{orbit}} of a vertex $v$, $\orb(v)$, is the set
of vertices $\{w \in V(G) \, | \, g(v)=w \text{ for some } g \in
\Aut(G)\}$.   The Orbit-Stabilizer Theorem says that for any
vertex $v$ in $G$, $|\Aut(G)|=|\stab(v)||\orb(v)|$
\cite{Godsil01}. So when we are building a minimal fixing set of
$G$, heuristically it makes sense to choose vertices with orbits
as large as possible.  This leads us to consider the following
algorithm for determining the fixing number of a finite graph $G$:
\medskip

\noindent \textbf{The Greedy Fixing Algorithm.}
\begin{enumerate} \item Find
a vertex $v \in G$ with $|\stab(v)|$ as small as possible
(equivalently, with $|\orb(v)|$ as large as possible).

\item Fix $v$ and repeat.

\item Stop when the stabilizer of the fixed vertices is trivial.
\end{enumerate}

\noindent The set of vertices fixed by the greedy fixing algorithm
must be a fixing set.  We define the \textit{\textbf{greedy fixing
number}} $\fix_{greedy}(G)$ of the graph $G$ to be the number of
vertices fixed by the greedy fixing algorithm.
\begin{question}
Is $\fix_{greedy}(G)$ well-defined for every finite graph $G$?  In
other words, is there a finite graph for which two different
choices in Step 1 of the greedy fixing algorithm produce two
different fixing sets of different sizes?
\end{question}
If $\fix_{greedy}(G)$ is well-defined, we must have $\fix(G) \leq
\fix_{greedy}(G)$. We use this same technique to derive upper
bounds on the fixing sets of groups in the next section.
\begin{question}
Assuming $\fix_{greedy}(G)$ is well-defined, is there a graph $G$
for which $\fix(G) \not= \fix_{greedy}(G)$?
\end{question}

%\begin{lemma}\label{faithful}
%Suppose the group action of $\Gamma$ on a graph $G$ is faithful
%and $g \in \Gamma$ has prime order $p$.  Then there exists an
%orbit in $G$ with size $p$.
%\end{lemma}
%
%\begin{proof}
%Since the action is faithful, there exists a vertex $v$ such that
%$g(v) \not= v$.  But $g^p(v) = v$, and since $p$ is prime, $v$
%must have orbit size $p$.
%\end{proof}

\section{Fixing Sets of Groups} \label{main}

%We say that a group element $g$ is the pointwise stabilizer of
%a set $S$, $g(S)$ fixes $S$ pointwise, if $g$ sends every
%element in $S$ to itself.
Following Albertson and Collins' exposition of distinguishing sets
of groups \cite{Albertson96}, we define the \textbf{\emph{fixing
set}} of a finite group $\Gamma$ to be $\fix(\Gamma)=\{ \fix(G)
\,|\, G$ is a finite graph with $\Aut(G) \cong \Gamma\}$.  Our
goal for the remainder of the paper is to find the fixing sets of
a few well-known finite groups.  We begin by describing two
procedures that can be used to generate specific examples.

For every graph $G$, the natural representation of the elements of
$\Aut(G)$ as permutations of the vertices of $G$ is a group action
of the group $\Aut(G)$ on the set $V(G)$.  Furthermore, $\Aut(G)$
acts \textit{\textbf{faithfully}} on $G$, i.e., the only element
of $\Aut(G)$ that fixes every vertex in $G$ is the identity
element.  A group action of $\Gamma$ on a graph $G$ is
\textit{\textbf{vertex-transitive}} if, given any two vertices
$u,v \in V(G)$, there is an element of $\Gamma$ that sends $u$ to
$v$.  The following theorem appears in \cite{Dixon96}.

\begin{theorem}
Let $\Gamma$ be a finite group.  The set of vertex-transitive
actions of $\Gamma$ on all possible sets of vertices $V$ is in
one-to-one correspondence with the conjugacy classes of subgroups
of $\Gamma$. Specifically, if $v$ is any vertex in $V$, the action
of $\Gamma$ on $V$ is determined by the conjugacy class of
$\stab(v)$. \label{transitive_action}
\end{theorem}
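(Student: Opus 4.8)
The plan is to exhibit explicit maps in both directions between the (isomorphism classes of) vertex-transitive actions of $\Gamma$ and the conjugacy classes of subgroups of $\Gamma$, and then check that these two maps are mutually inverse. The key tool throughout is the \emph{orbit map} attached to a chosen base vertex, which realizes any transitive $\Gamma$-set as a coset space, together with the Orbit--Stabilizer correspondence already in play in this section.

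First I would define the forward map. Given a vertex-transitive action of $\Gamma$ on a vertex set $V$, choose any $v \in V$ and assign to the action the conjugacy class of $\stab(v)$. To see that this is well defined I must check that it does not depend on the choice of $v$: if $w \in V$ is another vertex, transitivity gives some $g \in \Gamma$ with $g(v) = w$, and a direct computation shows $\stab(w) = g\,\stab(v)\,g^{-1}$, so $\stab(v)$ and $\stab(w)$ lie in the same conjugacy class. Next I would define the backward map: to a conjugacy class represented by a subgroup $H \leq \Gamma$, associate the action of $\Gamma$ by left multiplication on the set $\Gamma/H$ of left cosets. This action is transitive, and the stabilizer of the coset $H$ is exactly $H$; this already shows the backward map lands in transitive actions and that applying the forward map to it returns the conjugacy class of $H$. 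One must also verify that conjugate subgroups give $\Gamma$-equivariantly isomorphic coset actions, so that the backward map is well defined on conjugacy classes (if $K = gHg^{-1}$, the map $aH \mapsto agH \cdots$ style check, or more cleanly note that the coset $g^{-1}H \in \Gamma/H$ has stabilizer $g^{-1}Hg$).

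For the remaining composite, start from a transitive action of $\Gamma$ on $V$ with base vertex $v$ and set $H = \stab(v)$. The orbit map $g \mapsto g(v)$ is surjective by transitivity, is constant on left cosets of $H$, and hence factors through a bijection $\Gamma/H \to V$; one checks that this bijection intertwines left multiplication on $\Gamma/H$ with the given action on $V$, so the two actions are isomorphic as $\Gamma$-sets. Thus forward followed by backward is the identity on isomorphism classes of transitive actions, and we already saw backward followed by forward is the identity on conjugacy classes, so the two maps are inverse bijections. The final assertion of the statement is then immediate: the bijection is implemented by $v \mapsto \stab(v)$, so a transitive action is determined up to isomorphism by the conjugacy class of any point stabilizer.

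The main obstacle here is bookkeeping rather than any real difficulty: one has to be precise about what ``one-to-one correspondence'' means, namely that transitive $\Gamma$-sets are counted up to $\Gamma$-equivariant isomorphism, and then carefully verify the three routine compatibility checks — that the orbit map descends to cosets and is equivariant, that point stabilizers along a transitive action form a single conjugacy class, and that conjugate subgroups yield isomorphic coset actions. Each step is elementary, but the equivalence must be set up cleanly enough that the forward and backward maps are genuinely inverse to one another.
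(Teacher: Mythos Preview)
Your argument is the standard one and is correct in substance. Note, however, that the paper does not actually prove this theorem: it is stated with the remark ``The following theorem appears in \cite{Dixon96}'' and no proof is given. So there is no ``paper's own proof'' to compare against; your write-up simply supplies the classical argument that the cited reference contains.

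Two small points of bookkeeping in your sketch. First, the explicit map you gesture at, $aH \mapsto agH$, is not quite the right formula for the $\Gamma$-equivariant bijection $\Gamma/H \to \Gamma/K$ when $K = gHg^{-1}$; the map that works is $aH \mapsto ag^{-1}K$. Second, in your ``cleaner'' alternative you want the coset $gH$ (not $g^{-1}H$), since $\stab(gH) = gHg^{-1} = K$. Neither slip affects the logic, and once the orbit-map paragraph is in place it retroactively handles the well-definedness of the backward map anyway, so the circular appearance in the ordering is only expository.
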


Suppose that $\Gamma$ is the automorphism group of a graph $G$.
Then $\Gamma$ acts transitively on each orbit of the vertices of
$G$ under $\Gamma$.  Hence given a group $\Gamma$, to find a graph
$G$ with automorphism group $\Gamma$, we choose a set of subgroups
of $\Gamma$ and generate the orbits of vertices of $G$
corresponding to these subgroups using
Theorem~\ref{transitive_action}. There are two aspects of this
construction which make the procedure difficult. First, the action
of $\Gamma$ on the entire graph $G$ must be faithful for $\Gamma$
to be a valid automorphism group. Second, after we construct
orbits of vertices, we must construct the edges of $G$ so that the
set of permutations of vertices in $\Gamma$ is exactly the set of
edge-preserving permutations of $G$.  However, this is not always
possible.
%we give a rough sketch of an (exponential-time) algorithm for
%determining the set fixing set of a finite group. We first list
%all transitive actions of $\Gamma$ and all subgroups of $\Gamma$.
%Then for each transitive action of $\Gamma$ on a set $X$, we draw
%an arrow to the action of $\stab(v)$ on $X-v$.  An element of the
%fixing set of $\Gamma$ is obtained by starting with a union of
%transitive actions of $\Gamma$ on some sets $X_1, \ldots, X_k$
%such that each element of $\Gamma$ moves at least one element of
%$\bigcup X_i$, but this is no longer true if any of the sets $X_i$
%is removed from the action. We then find the shortest path from
%each $X_i$ to the trivial action, and take the sum of the lengths
%of these paths. An ad-hoc version of this algorithm was used to
%generate many of our examples.

An alternative approach uses the Orbit-Stabilizer Theorem.  Given
a graph $G$ and a fixing set $S$ of $G$, we order the elements of
$S$ as, say, $v_1, \ldots, v_k$, and we consider the chain of
subgroups $e=\stab(\{v_1, \ldots, v_k\}) \le \stab(\{v_1, \ldots,
v_{k-1}\}) \le \ldots \le \stab(v_1) \le \Aut(G)$.  If $o(v_i)$ is
the number of vertices in $\orb(v_i)$ under the action of
$\stab(\{v_1, \ldots, v_{i-1}\})$, then $|\stab(\{v_1, \ldots,
v_{i-1}\})|=o(v_i) |\stab(\{v_1, \ldots, v_{i}\})|$.  So
$|\Aut(G)|=\Pi_{1 \leq i \leq k} o(v_i)$. Hence given a finite
group $\Gamma$, to find a graph $G$ with automorphism group
$\Gamma$ and fixing number $k$, we choose a sequence of orbit
sizes $(o(v_1), \ldots, o(v_k))$ whose product is $|\Gamma|$ and
look for a graph with these orbit sizes.  Both of these procedures
were used to generate examples given below.

We now prove a few theorems valid for the fixing set of any finite
group.  Let $\Gamma$ be a group generated by the set of elements
$\mathcal{G}=\{g_1, g_2, \ldots g_k\}$. The \textbf{\emph{Cayley
graph}} $C(\Gamma, \mathcal{G})$ of $\Gamma$ with respect to the
generating set $\mathcal{G}$ is a directed, edge-labeled
multigraph with a vertex for each element of $\Gamma$, and a
directed edge from the group element $h_1$ to the group element
$h_2$ labeled with the generator $g \in \mathcal{G}$ if and only
if $gh_1=h_2$.

We obtain an undirected, edge-unlabeled graph $F(\Gamma,
\mathcal{G})$ from the Cayley graph $C(\Gamma, \mathcal{G})$ by
replacing each directed, labeled edge of $C(\Gamma, \mathcal{G})$
with a ``graph gadget'' so that $F(\Gamma, \mathcal{G})$ has the
same automorphisms as $C(\Gamma, \mathcal{G})$.  This technique is
due to Frucht \cite{Frucht38, Frucht49} and is outlined in greater
detail in \cite{Beineke97}.  An example is shown in Figure
\ref{cayley_frucht}.  We call $F(\Gamma, \mathcal{G})$ the
\textbf{\emph{Frucht Graph}} of $\Gamma$ with respect to the
generating set $\mathcal{G}$.  The following lemma is easy to
prove and also follows from the exposition in \cite{Beineke97}.

\begin{figure}[h!]
\begin{center}\includegraphics[width=6cm]{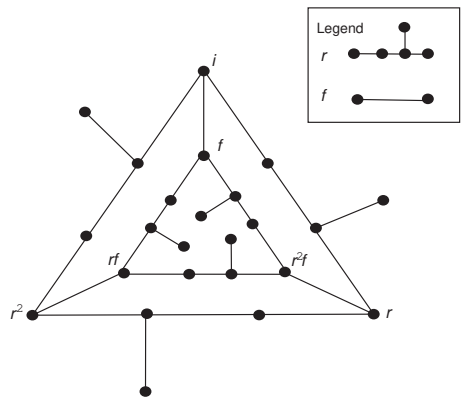}
\end{center} \caption{The Frucht graph $F(D_3, \{r,f\})$.} \label{cayley_frucht}
\end{figure}

\begin{lemma}For any group $\Gamma$ and any generating set $\mathcal{G}$
of $\Gamma$, $\Aut(C(\Gamma, \mathcal{G}))=\Gamma$ and
$\Aut(F(\Gamma, \mathcal{G}))=\Gamma$. Furthermore, for two
elements $g,h \in \Gamma$, the automorphism $g$ takes the vertex
$h$ to the vertex $gh$ in both $C(\Gamma, \mathcal{G})$ and
$F(\Gamma, \mathcal{G})$. \label{cayley_aut}
\end{lemma}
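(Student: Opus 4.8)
The plan is to establish both assertions first for the Cayley graph $C(\Gamma,\mathcal{G})$ and then to transfer them to the Frucht graph $F(\Gamma,\mathcal{G})$ using the properties of Frucht's gadget construction already cited in the excerpt.

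For $C(\Gamma,\mathcal{G})$, I would begin by embedding $\Gamma$ into $\Aut(C(\Gamma,\mathcal{G}))$: for each $g \in \Gamma$, let $\lambda_g$ be the permutation of the vertex set $\Gamma$ defined by $\lambda_g(h) = gh$. That $\lambda_g$ preserves arc directions and edge-labels is immediate from the definition of the arc set, since left multiplication by $g$ carries the generator-labeled arc at $h$ to the arc at $gh$ with the same label; so $\lambda_g \in \Aut(C(\Gamma,\mathcal{G}))$. The assignment $g \mapsto \lambda_g$ is a homomorphism, and it is injective because $\lambda_g(e) = g$. Thus $\{\lambda_g : g \in \Gamma\}$ is a copy of $\Gamma$ inside $\Aut(C(\Gamma,\mathcal{G}))$ in which the automorphism named $g$ sends $h$ to $gh$; this already gives the ``furthermore'' clause for $C(\Gamma,\mathcal{G})$.

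The main step is the reverse inclusion, i.e.\ that every $\sigma \in \Aut(C(\Gamma,\mathcal{G}))$ equals some $\lambda_g$. I would set $g = \sigma(e)$ and show $\sigma = \lambda_g$. The crucial observation is that each vertex $h$ is incident with exactly one arc carrying a given generator label $s$ in each direction; hence a direction- and label-preserving automorphism $\sigma$ must send the $s$-arc at $h$ to the $s$-arc at $\sigma(h)$, and comparing endpoints shows that $\sigma$ commutes with the operation of following an $s$-labeled arc, for every $s \in \mathcal{G}$. Because $\mathcal{G}$ generates $\Gamma$ (following an arc backwards handles inverse generators), every vertex is reached from $e$ by some word in the generators, and an induction on the length of that word forces $\sigma(h) = g h$ for all $h$, i.e.\ $\sigma = \lambda_g$. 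I expect this to be the only genuinely nontrivial part of the proof, and it is exactly where the hypothesis that $\mathcal{G}$ generates $\Gamma$ is needed.

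For $F(\Gamma,\mathcal{G})$, I would invoke the defining properties of Frucht's construction: each directed labeled arc of $C(\Gamma,\mathcal{G})$ is replaced by a gadget chosen precisely so that the $|\Gamma|$ original vertices remain distinguishable inside $F(\Gamma,\mathcal{G})$, every automorphism of $F(\Gamma,\mathcal{G})$ permutes those original vertices among themselves and restricts there to an automorphism of $C(\Gamma,\mathcal{G})$, and conversely every automorphism of $C(\Gamma,\mathcal{G})$ extends uniquely to $F(\Gamma,\mathcal{G})$. Restriction to the original vertices is then an isomorphism $\Aut(F(\Gamma,\mathcal{G})) \cong \Aut(C(\Gamma,\mathcal{G})) = \Gamma$ that is compatible with the action on those vertices, so in $F(\Gamma,\mathcal{G})$ as well the automorphism $g$ takes $h$ to $gh$. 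The content here is just Frucht's theorem, which the excerpt has already pointed to; the only point requiring a little care is checking that the chosen gadget is rigid and asymmetric enough to introduce no automorphisms beyond those already present in $C(\Gamma,\mathcal{G})$.
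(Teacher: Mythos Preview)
The paper does not actually prove this lemma; it merely states that it ``is easy to prove and also follows from the exposition in'' the cited reference. So there is no proof in the paper to compare against, and your sketch is substantially more detailed than anything the paper offers.

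Your overall strategy---embed $\Gamma$ via the regular action, then show that any label- and direction-preserving automorphism is determined by its value on $e$ because words in the generators reach every vertex, and finally invoke the defining properties of Frucht's gadgets---is the standard one and is sound in outline.

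There is, however, a genuine slip in your first step. With the paper's convention (an $s$-labeled arc goes from $h_1$ to $h_2$ when $sh_1 = h_2$), left multiplication $\lambda_g(h) = gh$ does \emph{not} preserve labeled arcs in general: the $s$-arc out of $h$ ends at $sh$, and $\lambda_g$ sends the pair $(h,sh)$ to $(gh, gsh)$, whereas the $s$-arc out of $gh$ ends at $sgh$; these coincide only when $g$ and $s$ commute. What does work under this convention is \emph{right} multiplication $\rho_g(h) = hg$ (or $h g^{-1}$, if you want a homomorphism rather than an anti-homomorphism). The lemma's ``furthermore'' clause is itself loose on this point, so the mismatch is inherited from the statement; but your claim that $\lambda_g \in \Aut(C(\Gamma,\mathcal{G}))$ is ``immediate from the definition'' is false as written. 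The fix is cosmetic---replace left by right multiplication throughout---and the rest of your argument goes through unchanged.
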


\begin{corollary}
If $G$ is a Cayley graph or a Frucht graph of a non-trivial group,
then $\fix(G)=1$. \label{cayley}
\end{corollary}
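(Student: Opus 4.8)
The plan is to show that a single well-chosen vertex forms a fixing set, which (since the group is non-trivial, hence the graph is not rigid and $\fix(G) \neq 0$) forces $\fix(G) = 1$. First I would invoke Lemma \ref{cayley_aut}: for a Cayley graph or Frucht graph $G$ of a group $\Gamma$, we have $\Aut(G) = \Gamma$, and the automorphism corresponding to $g \in \Gamma$ sends the vertex $h$ to the vertex $gh$. The key observation is that this action is \emph{regular} (sharply transitive on the vertex set indexed by $\Gamma$): if $g$ fixes some vertex $h$, then $gh = h$, and multiplying on the right by $h^{-1}$ gives $g = e$. So $\stab(h)$ is trivial for every vertex $h$ coming from a group element.

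Next I would pick any such vertex $v$ — in a Cayley graph every vertex is one of these, and in a Frucht graph we take $v$ to be one of the vertices inherited from the underlying Cayley graph (the gadgets replacing edges do not introduce new automorphisms, by construction, so the automorphism action on these ``original'' vertices is still the regular action of $\Gamma$). Then $\{v\}$ is a fixing set, so $\fix(G) \le 1$. Since $\Gamma$ is non-trivial, $\Aut(G) = \Gamma$ is non-trivial, so $G$ is not rigid and $\fix(G) \ge 1$. Combining, $\fix(G) = 1$.

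The only point needing a little care — the ``main obstacle,'' such as it is — is the Frucht graph case: one must be sure that the vertex $v$ we fix really does have trivial stabilizer in $\Aut(F(\Gamma,\mathcal{G}))$, i.e.\ that passing from $C(\Gamma,\mathcal{G})$ to $F(\Gamma,\mathcal{G})$ does not enlarge the stabilizer of an original vertex. But this is exactly guaranteed by Lemma \ref{cayley_aut}, which identifies $\Aut(F(\Gamma,\mathcal{G}))$ with $\Gamma$ acting by the same formula $h \mapsto gh$ on the original vertices; so the regularity argument above applies verbatim. Everything else is a one-line application of the orbit--stabilizer bookkeeping already set up in the excerpt.
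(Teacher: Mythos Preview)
Your proposal is correct and follows essentially the same argument as the paper: invoke Lemma~\ref{cayley_aut} to identify $\Aut(G)$ with $\Gamma$ acting by left multiplication, observe that $gh=h$ forces $g=e$ so any group-element vertex has trivial stabilizer, and combine with $\Gamma$ non-trivial to get $\fix(G)=1$. The paper's proof is slightly terser but structurally identical, including the remark (stated just after the proof) that every vertex of a Cayley graph and every non-gadget vertex of a Frucht graph is a one-element fixing set.
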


\begin{proof}Suppose $G=F(\Gamma, \mathcal{G})$ for some group $\Gamma$ (the
argument for Cayley graphs is completely analogous).  Since
$\Aut(G)=\Gamma$ by Lemma \ref{cayley_aut}, and $\Gamma$ is not
trivial by hypothesis, $\fix(G)>0$.  Now let $h$ be an element of
$\Gamma$ (and so also a vertex in $G$). For any non-identity
element $g \in \Gamma$, by Lemma \ref{cayley_aut}, $g(h)=gh
\not=h$.  Thus $\stab(h)$ is trivial, and the single-vertex set
$\{h\}$ is a fixing set of $G$.
\end{proof}

\noindent In fact, the proof of Corollary~\ref{cayley} implies
that every vertex of a Cayley graph is a fixing set, and every
non-gadget vertex of a Frucht graph is a fixing set.

\begin{corollary}For any non-trivial finite group $\Gamma$, $1 \in
\fix(\Gamma)$. \label{fixing_number_one} \hfill $\qed$
\end{corollary}

The \textit{\textbf{length}} $l(\Gamma)$ of a finite group
$\Gamma$ is the maximum number of subgroups in a chain of
subgroups $e < \Gamma_1 < \Gamma_2 < \ldots <
\Gamma_{l(\Gamma)}=\Gamma$ \cite{Cameron89}.

\begin{proposition}
For any finite group, $\max(\fix(\Gamma)) \leq l(\Gamma)$.
\label{length}
\end{proposition}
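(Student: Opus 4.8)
The plan is to prove the apparently stronger statement that $\fix(G)\le l(\Gamma)$ for \emph{every} finite graph $G$ with $\Aut(G)\cong\Gamma$; taking the maximum over all such $G$ then gives $\max(\fix(\Gamma))\le l(\Gamma)$. The tool is essentially the greedy fixing algorithm together with the chain of stabilizer subgroups it produces, in the spirit of the Orbit--Stabilizer discussion above.

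If $\Gamma$ is trivial then $G$ is rigid, so $\fix(G)=0$ and the bound holds. So suppose $\Gamma$ is non-trivial and fix a graph $G$ with $\Aut(G)\cong\Gamma$. I would build a sequence of distinct vertices $v_1,v_2,\dots$, setting $H_0=\Aut(G)$ and $H_i=\stab(\{v_1,\dots,v_i\})=H_{i-1}\cap\stab(v_i)$ for $i\ge 1$. Given $v_1,\dots,v_i$ with $H_i$ non-trivial (the case $i=0$ included), note that $H_i$ is a non-trivial subgroup of $\Aut(G)$ and that $\Aut(G)$ acts faithfully on $V(G)$; hence some non-identity $g\in H_i$ moves some vertex, which I choose to be $v_{i+1}$. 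Then $g\in H_i$ but $g\notin\stab(v_{i+1})$, so $H_{i+1}$ is a \emph{proper} subgroup of $H_i$; in particular $v_{i+1}\notin\{v_1,\dots,v_i\}$, so the vertices really are distinct. Since $G$ is finite this process terminates after some $k$ steps with $H_k$ trivial, and then $\{v_1,\dots,v_k\}$ is a fixing set of $G$, giving $\fix(G)\le k$.

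It remains to bound $k$. By construction $e=H_k<H_{k-1}<\dots<H_1<H_0=\Gamma$ is a strictly increasing chain of subgroups of $\Gamma$ with $k$ inclusions, running from the trivial subgroup up to $\Gamma$. By the definition of the length $l(\Gamma)$, any such chain satisfies $k\le l(\Gamma)$, so $\fix(G)\le k\le l(\Gamma)$. Since $G$ was an arbitrary graph with automorphism group $\Gamma$, $\max(\fix(\Gamma))\le l(\Gamma)$.

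The only step that requires any argument is the existence, at each stage, of a non-identity element of $H_i$ moving some vertex — and this is immediate from faithfulness of the action of $\Aut(G)$ on $V(G)$, since the identity is the only automorphism fixing every vertex. So I do not expect a genuine obstacle; the one thing to be careful about is matching the count of inclusions in the stabilizer chain to the exact convention used in the definition of $l(\Gamma)$, but that is bookkeeping rather than a real difficulty.
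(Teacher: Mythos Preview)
Your proof is correct and follows essentially the same approach as the paper: greedily choose vertices so that each successive pointwise stabilizer is a proper subgroup of the previous one, obtain a strictly decreasing chain of subgroups from $\Gamma$ down to the identity, and conclude that the number of steps (hence the size of the fixing set produced) is at most $l(\Gamma)$. The only cosmetic difference is that the paper phrases the choice of $v_{i+1}$ as ``a vertex with orbit larger than one'' and invokes the Orbit--Stabilizer Theorem, whereas you phrase it via faithfulness of the action; these are equivalent.
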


\begin{proof}
If $\Gamma$ is trivial, it has length 0 and fixing set $\{0\}$.
Now suppose $\Gamma$ is non-trivial, and let $G$ be a graph with
$\Aut(G)=\Gamma$.  We fix a vertex $v_1$ in $G$ with orbit larger
than one.  By the Orbit-Stabilizer Theorem, $\stab(v_1)$ is a
proper subgroup of $\Gamma$.  If we can find a different vertex
$v_2$ with orbit greater than one under the action of
$\stab(v_1)$, we fix $v_2$.  We continue in this way until we have
fixed $G$. Since at each stage, $\stab(\{v_1, \ldots, v_i\})$ is a
proper subgroup of $\stab(\{v_1, \ldots, v_{i-1}\})$, we cannot
have fixed more than the length of the group.
\end{proof}

\begin{corollary}Let $k$ be the number of primes in the prime factorization
of $|\Gamma|$, counting multiplicities.  Then $\max(\fix(\Gamma))
\leq k$. \hfill $\qed$ \label{factorization}
\end{corollary}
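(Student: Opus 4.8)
The plan is to deduce this immediately from Proposition~\ref{length}, which gives $\max(\fix(\Gamma)) \leq l(\Gamma)$. It therefore suffices to show that the length of any finite group $\Gamma$ is at most $k$, the number of prime factors of $|\Gamma|$ counted with multiplicity. First I would fix a chain of subgroups $e = \Gamma_0 < \Gamma_1 < \Gamma_2 < \ldots < \Gamma_m = \Gamma$ realizing the length, so $m = l(\Gamma)$.

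The key observation is that since each inclusion $\Gamma_{i-1} < \Gamma_i$ is proper, Lagrange's theorem forces $|\Gamma_{i-1}|$ to be a proper divisor of $|\Gamma_i|$, so the index $[\Gamma_i : \Gamma_{i-1}] = |\Gamma_i| / |\Gamma_{i-1}|$ is an integer strictly greater than $1$, and hence is divisible by at least one prime. Writing $|\Gamma| = |\Gamma_m| = \prod_{i=1}^{m} [\Gamma_i : \Gamma_{i-1}]$ and counting prime factors with multiplicity on both sides, the right-hand side contributes at least one prime for each of the $m$ factors, so $k \geq m = l(\Gamma)$. Combining this with Proposition~\ref{length} yields $\max(\fix(\Gamma)) \leq l(\Gamma) \leq k$.

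There is no real obstacle here; the only point requiring a moment of care is the reduction to Lagrange's theorem, i.e.\ observing that a \emph{proper} subgroup has index at least $2$ and that indices multiply along the chain, so the bound on the length is genuinely a statement about the factorization of $|\Gamma|$ rather than about the subgroup structure of $\Gamma$.
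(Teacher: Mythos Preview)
Your proposal is correct and matches the paper's approach exactly: the paper marks this corollary with a \qed\ immediately after the statement, leaving the deduction from Proposition~\ref{length} (via the Lagrange-theorem bound $l(\Gamma) \leq k$) implicit. You have simply spelled out that implicit step.
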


\begin{example}
The graph $C_6$ has automorphism group $D_6$ and fixing number 2.
 The graph $C_3 \cup P_2$ has automorphism group $D_6$ and fixing number 3.  On the
 other hand, $|D_6|=12=2 \cdot 2 \cdot 3$.  Hence $\fix(D_6)=\{1,2,3\}$ by Corollaries \ref{fixing_number_one}
 and \ref{factorization}.
\end{example}

\begin{example}
The graph shown in Figure 2 has automorphism group $A_4$ and
fixing number 2.  On the other hand, $|A_4|=12=2 \cdot 2 \cdot 3$.
So $\{1,2\} \subseteq \fix(A_4) \subseteq \{1,2,3\}$, again by
Corollaries \ref{fixing_number_one} and \ref{factorization}.
Lemma~\ref{A4} shows that $3 \not\in \fix(A_4)$, so in fact
$\fix(A_4)=\{1,2\}$.
%We do not know if $3 \in \fix(A_4)$.
\end{example}

\begin{lemma}
There is no graph $G$ with $\fix(G)=3$ and $\Aut(G)=A_4$.
\label{A4}
\end{lemma}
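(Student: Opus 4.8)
The plan is to prove something slightly stronger, namely that \emph{every} graph $G$ with $\Aut(G) \cong A_4$ satisfies $\fix(G) \le 2$; this immediately gives $3 \notin \fix(A_4)$ and hence the lemma. The argument rests entirely on the subgroup lattice of $A_4$: its proper nontrivial subgroups are the three mutually conjugate order-$2$ subgroups generated by double transpositions, the four mutually conjugate order-$3$ subgroups generated by $3$-cycles, and the unique (hence normal) Klein four-subgroup $V$ of order $4$. In particular $A_4$ has no subgroup of order $6$. Since $\Aut(G)$ acts on $V(G)$, each orbit has size dividing $12$, and by the Orbit--Stabilizer Theorem an orbit of size $m$ has point stabilizers of order $12/m$; ruling out $m=2$ (which would require an order-$6$ subgroup), the possible orbit sizes are $1, 3, 4, 6, 12$, with stabilizers of orders $12, 4, 3, 2, 1$, and an orbit of size $3$ has stabilizer exactly $V$ because $V$ is the only subgroup of order $4$.

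First I would dispatch the case in which $G$ has an orbit of size $\ge 4$. If $G$ has an orbit of size $12$, choose a vertex $v$ in it; then $\stab(v)$ is trivial and $\{v\}$ fixes $G$. If $G$ has an orbit of size $4$ or $6$, choose a vertex $v$ in it; then $\stab(v)$ has prime order $p \in \{2,3\}$, so it is cyclic, generated by some $\sigma \neq e$. Because $\Aut(G)$ acts faithfully, $\sigma$ moves some vertex $w$. Then $\stab(\{v,w\}) = \stab(v) \cap \stab(w)$ is a proper subgroup of the prime-order group $\stab(v)$, since it omits $\sigma$, hence trivial; so $\{v,w\}$ fixes $G$ and $\fix(G) \le 2$.

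It remains to rule out the case in which every orbit of $G$ has size $1$ or $3$. In that situation every point stabilizer is $A_4$ or $V$, so, using that $V \trianglelefteq A_4$, the intersection $\bigcap_{v \in V(G)} \stab(v)$ contains $V$ and is in particular nontrivial. This contradicts the fact that $\Aut(G)$ acts faithfully on $G$, so this case cannot occur. Combining the cases, $\fix(G) \le 2$ for every graph $G$ with $\Aut(G) \cong A_4$, which proves the lemma.

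I expect the only point requiring genuine care is the last case: confirming that no faithful action of $A_4$ can have all orbits of size at most $3$, which is exactly where the normality of $V$ and the nonexistence of an order-$6$ subgroup of $A_4$ are used. Everything else is a routine application of the Orbit--Stabilizer Theorem together with the observation that a group of prime order has no proper nontrivial subgroup.
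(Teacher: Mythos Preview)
Your proof is correct and follows essentially the same approach as the paper's: both hinge on the observation that a point stabilizer of order at most $3$ (hence trivial or of prime order) can be killed by fixing one further vertex, and that the uniqueness of the order-$4$ subgroup $V \trianglelefteq A_4$ forces a contradiction when every relevant stabilizer equals $V$. The paper frames this as a contradiction from a putative minimum fixing set of size $3$ (showing all three stabilizers would have to be $V$, hence $\stab(S)=V$), while you give a direct case analysis on orbit sizes and invoke faithfulness of the action in the final case, but the mathematical content is the same.
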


\begin{proof}
Suppose by way of contradiction that $G$ is a graph with
$\fix(G)=3$ and $\Aut(G)=A_4$.  Let $S=\{v_1,v_2,v_3\}$ be a
minimum size fixing set of $G$.  Note that $\stab(v_1)$,
$\stab(v_2)$, and $\stab(v_3)$ are all proper subgroups of $A_4$.
Therefore they must be isomorphic to $\mathbb{Z}_2$, $\mathbb{Z}_2
\times \mathbb{Z}_2$, or $\mathbb{Z}_3$.  But if any of them have
order less than 4, fixing that vertex and one other will fix $G$,
and $\fix(G)=2$. So $\stab(v_1) \cong \stab(v_2) \cong \stab(v_3)
\cong \mathbb{Z}_2 \times \mathbb{Z}_2$.  But there is only one
copy of $\mathbb{Z}_2 \times \mathbb{Z}_2$ in $A_4$, so
$\stab(v_1)=\stab(v_2)=\stab(v_3)$, and this subgroup must
therefore also equal $\stab(\{v_1, v_2, v_3\})$.  So $\{v_1, v_2,
v_3\}$ is not a fixing set of $G$, which is a contradiction.

%By the Orbit-Stabilizer Theorem and Corollary~\ref{factorization},
%the vertices in $S$ have orbits of size 3, 2, and 2, respectively.
%
%Recall that $A_4$ contains the identity, 3 order two elements, and
%8 order three elements (in the standard permutation representation
%of $A_4$, the order two elements are the products of two 2-cycles
%and the order three elements are 3-cycles).  Let
%$\Gamma_1=\stab(v_1)$. So $|\Gamma_1|=4$. The only subgroup of
%$A_4$ with order 4 is isomorphic to $\mathbb{Z}_2 \times
%\mathbb{Z}_2$, so $\Gamma_1 \cong \mathbb{Z}_2 \times
%\mathbb{Z}_2$.  Since the action of $A_4$ on $\orb(v_1)$ is
%transitive, by Theorem~\ref{transitive_action}, it is the unique
%action on $\orb(v_1)$, in which every order two element in $A_4$
%acts as the identity on $\orb(v_1)$.  By the same reasoning, every
%order three element in $A_4$ acts as the identity on $\orb(v_2)
%\cup \orb(v_3)$. By a theorem in \cite{Dixon96}, this implies that
%$A_4$ can be written as the direct product of $\Aut(\orb(v_1))$
%and $\Aut(\orb(v_2) \cup \orb(v_3))$, i.e., $A_4 \cong
%\mathbb{Z}_3 \times \mathbb{Z}_2 \times \mathbb{Z}_2$.  We have
%obtained a contradiction.
\end{proof}

\begin{figure}[h!]
\begin{center}\includegraphics[width=5cm]{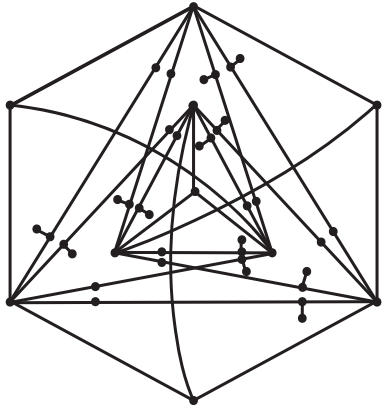}
\end{center} \caption{A graph $G$ with $\Aut(G)=A_4$ and $\fix(G)=2$.}
\end{figure}

\begin{lemma} Suppose $G$ is a graph, $\Gamma=\Aut(G)$ is a finite non-trivial group, and
$g \in \Gamma$ is an element of order $p^k$, for $p$ prime and $k$
a positive integer.  Then there exists a set of $p^k$ vertices
$v_1, \ldots, v_{p^k}$ in $G$ such that, as a permutation of the
vertices of $G$, $g$ contains the cycle $(v_1 \ldots v_{p^k})$.
\label{order_pk} \end{lemma}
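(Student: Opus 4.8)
The plan is to work entirely with the cycle decomposition of $g$ viewed as a permutation of $V(G)$. First I would write $g = \sigma_1 \sigma_2 \cdots \sigma_r$ as a product of disjoint cycles acting on $V(G)$, where $\sigma_i$ has length $m_i$. The basic observation is that the order of $g$ as a permutation is $\mathrm{lcm}(m_1, \ldots, m_r)$, and that each individual $m_i$ divides this order. Since $g$ has order $p^k$, every $m_i$ divides $p^k$, so each $m_i$ is itself a power of $p$, say $m_i = p^{j_i}$ with $0 \le j_i \le k$.

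Next I would use the fact that $\mathrm{lcm}$ of a collection of powers of a single prime $p$ is just the largest of them: $\mathrm{lcm}(p^{j_1}, \ldots, p^{j_r}) = p^{\max_i j_i}$. Combining this with the previous step, $p^k = p^{\max_i j_i}$, so $\max_i j_i = k$. Hence there is some index $i$ with $j_i = k$, i.e., $\sigma_i$ is a cycle of length exactly $p^k$. Labeling the $p^k$ vertices in the support of $\sigma_i$ as $v_1, \ldots, v_{p^k}$ in cyclic order gives the desired cycle $(v_1 \cdots v_{p^k})$ appearing in the cycle decomposition of $g$, which is exactly the claim.

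There is essentially no hard step here; the only thing to be a little careful about is citing the standard facts correctly, namely that a permutation's order equals the least common multiple of its cycle lengths and that each cycle length divides the permutation's order. If one wants to avoid invoking these as black boxes, one can argue directly: if $(v_1 \cdots v_m)$ is a cycle of $g$, then $g^m(v_i) = v_i$ for each $v_i$ in this cycle while $g^t(v_1) \ne v_1$ for $0 < t < m$, so the order of $g$ restricted to this cycle's support is $m$; since $g^{p^k} = \mathrm{id}$ we get $m \mid p^k$, and since $g^{p^{k-1}} \ne \mathrm{id}$ there must be at least one vertex moved by $g^{p^{k-1}}$, which forces that vertex to lie on a cycle of length not dividing $p^{k-1}$, hence of length exactly $p^k$. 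Either phrasing yields the result immediately.
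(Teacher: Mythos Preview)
Your argument is correct and is exactly the approach the paper takes; the paper's own proof is in fact the one-line assertion that the cycle decomposition of $g$ must include a cycle of length $p^k$, relying on the same standard fact about orders of permutations that you spell out in detail. Your version simply supplies the justification the paper leaves implicit.
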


\begin{proof}
Since $g$ has order $p^k$, the cycle decomposition of $g$ must
include a cycle of length $p^k$.  Label these vertices $v_1,
\ldots, v_{p^k}$.
\end{proof}

Recall that the cartesian product of two groups $\Gamma_1$ and
$\Gamma_2$ is the group $\Gamma_1 \times \Gamma_2= \{(g,h) \,|\, g
\in \Gamma_1, h \in \Gamma_2 \}$ with group operation defined by
$(g_1,h_1)(g_2,h_2) = (g_1 g_2, h_1 h_2)$.  Recall also that the
sum of two sets $S$ and $T$ is $S+T=\{s+t \,|\, s \in S, t \in
T\}$.

\begin{lemma} \label{graph_product}
If $\Gamma_1$ and $\Gamma_2$ are finite non-trivial groups, then
$\fix(\Gamma_1)+\fix(\Gamma_2) \subseteq \fix(\Gamma_1 \times
\Gamma_2)$.
% If $\Gamma_1$ and $\Gamma_2$ have
%no nontrivial subgroups in common, then
%$\fix(\Gamma_1)+\fix(\Gamma_2) = \fix(\Gamma_1 \times \Gamma_2)$.
 \label{directproduct}
\end{lemma}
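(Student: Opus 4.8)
The plan is to realize each value $m+n$ with $m\in\fix(\Gamma_1)$ and $n\in\fix(\Gamma_2)$ as the fixing number of a disjoint union $G_1\cup G_2$, where $\Aut(G_i)\cong\Gamma_i$, $\fix(G_1)=m$, and $\fix(G_2)=n$. Two observations make this go through. First, if $G_1$ and $G_2$ have \emph{no connected component in common}, then every automorphism of $G_1\cup G_2$ permutes connected components and cannot send a component of one side to a component of the other, so it preserves the partition $V(G_1)\sqcup V(G_2)$; hence $\Aut(G_1\cup G_2)$ is the direct product of $\Aut(G_1)$ and $\Aut(G_2)$ acting componentwise, and in particular $\Aut(G_1\cup G_2)\cong\Gamma_1\times\Gamma_2$. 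Second, under this identification the stabilizer of a set $S=S_1\sqcup S_2$ (with $S_i\subseteq V(G_i)$) is $\stab(S_1)\times\stab(S_2)$, which is trivial exactly when $S_1$ fixes $G_1$ and $S_2$ fixes $G_2$; so the smallest $S$ with trivial stabilizer has size $\fix(G_1)+\fix(G_2)$, giving $\fix(G_1\cup G_2)=m+n$. It therefore suffices, for every such $m$ and $n$, to build graphs $G_1,G_2$ with the prescribed automorphism groups and fixing numbers that share no connected component.

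The one delicate point is that requirement: two graphs realizing the prescribed data could a priori even be isomorphic to each other (e.g.\ if $\Gamma_1\cong\Gamma_2$ and $m=n$). I would dispose of this by forcing $G_2$ to be connected and strictly larger than $G_1$. For this I would use two routine facts about complements — $\Aut(\overline H)=\Aut(H)$ and $\fix(\overline H)=\fix(H)$ (fixing sets depend only on the action of the automorphism group on the vertex set), and $\overline H$ is connected whenever $H$ is disconnected. Start from any graph $H$ with $\Aut(H)\cong\Gamma_2$ and $\fix(H)=n$, and choose a connected graph $R$ with trivial automorphism group and $|V(R)|>|V(G_1)|+|V(H)|$; such $R$ exist for all large orders because almost all graphs are rigid~\cite{Beineke97}. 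Since $R$ is rigid and is not isomorphic to any component of $H$ (being larger than $H$ itself), the first observation applied to $H$ and $R$ gives $\Aut(H\cup R)\cong\Gamma_2$ and $\fix(H\cup R)=n$; and $H\cup R$ is disconnected, so $G_2:=\overline{H\cup R}$ is connected with $\Aut(G_2)\cong\Gamma_2$, $\fix(G_2)=n$, and $|V(G_2)|=|V(H)|+|V(R)|>|V(G_1)|$.

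Now $G_2$ is connected with more vertices than $G_1$, so it is not isomorphic to any connected component of $G_1$; as $G_2$ is its own only component, $G_1$ and $G_2$ share no connected component. The two observations then yield $\Aut(G_1\cup G_2)\cong\Gamma_1\times\Gamma_2$ and $\fix(G_1\cup G_2)=m+n$, so $m+n\in\fix(\Gamma_1\times\Gamma_2)$; letting $m$ and $n$ range over $\fix(\Gamma_1)$ and $\fix(\Gamma_2)$ completes the proof. I expect the step requiring the most care to be the first observation — verifying that a disjoint union of two graphs with no common component has automorphism group the direct product of their automorphism groups — together with the complement-plus-rigid-padding maneuver used to secure the "no common component" hypothesis; the stabilizer computation in the second observation and the final assembly are bookkeeping with tools already in play.
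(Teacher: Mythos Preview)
Your proof is correct and follows the same high-level strategy as the paper: realize $m+n$ as the fixing number of a disjoint union $G_1\cup G_2$ whose pieces cannot be interchanged by any automorphism, so that $\Aut(G_1\cup G_2)\cong\Gamma_1\times\Gamma_2$ and $\fix(G_1\cup G_2)=\fix(G_1)+\fix(G_2)$. The difference lies entirely in how the ``no cross-automorphisms'' condition is engineered. The paper leaves $G_1$ alone and modifies $G_2$ by attaching a long rigid gadget $Y_k$ (with $k$ larger than $|G_1|+|G_2|$) to \emph{every} vertex of $G_2$; this preserves $\Aut(G_2)$ and $\fix(G_2)$ while making each component of the modified $G_2$ too large to match any component of $G_1$. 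You instead pad $G_2$ with a single large rigid component and then pass to the complement, exploiting that $\Aut(\overline H)=\Aut(H)$, $\fix(\overline H)=\fix(H)$, and that the complement of a disconnected graph is connected. Your route is a bit slicker in that it produces a connected $G_2$ outright and avoids reasoning about what gadget-attachment does to automorphisms; the paper's route is more hands-on but avoids invoking the existence of arbitrarily large connected rigid graphs and the complement facts. Both are short and either would serve.
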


\begin{proof}
Let $a \in \fix(\Gamma_1)$ and $b \in \fix(\Gamma_2)$.  Then there
exist graphs $G_1$ and $G_2$ with $\Aut(G_1)=\Gamma_1$,
$\Aut(G_2)=\Gamma_2$, $\fix(G_1)=a$, and $\fix(G_2)=b$. Let $G_2'$
be the graph obtained from $G_2$ by attaching the graph $Y_k$
shown in Figure~\ref{path_asym} for some large value of $k$ (for
example, $|G_1|+|G_2|$) to each vertex of $G_2$ at the vertex $a$.
Now consider the graph $H=G_1 \cup G_2'$, the disjoint union of
the graphs $G_1$ and $G_2'$.  This graph has no automorphisms that
exchange vertices between $G_1$ and $G_2$, so we must have
$\Aut(H) \cong \Aut(G_1) \times \Aut(G_2') \cong \Aut(G_1) \times
\Aut(G_2) \cong \Gamma_1 \times \Gamma_2$. Furthermore, $H$ is
fixed if and only if both $G_1$ and $G_2$ are fixed, so
$\fix(H)=a+b$.  Therefore $a+b \in \fix(\Gamma_1 \times
\Gamma_2)$.
\end{proof}

\begin{figure}[h!] \label{path_asym}
\begin{center}\includegraphics[width=7cm]{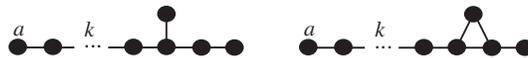}
\end{center} \caption{The graph $Y_k$ in the proof of Lemma~\ref{graph_product} is shown on the left,
and the graph $A_k$ in the proof of Theorem~\ref{abelian_theorem} is shown on the right.}
\end{figure}

%In fact, suppose that $\Gamma_1$ and $\Gamma_2$ are finite
%non-trivial groups, with $\fix(\Gamma_1)=\{1, \ldots, m\}$ and
%$\fix(\Gamma_2)=\{1, \ldots, n\}$.  Then by
%Corollary~\ref{fixing_number_one} and Lemma~\ref{directproduct},
%$\{1, \ldots, mn\} \subseteq \fix(\Gamma_1 \times \Gamma_2)$.

\noindent Note that for two finite non-trivial groups $\Gamma_1$
and $\Gamma_2$, $1 \in \fix(\Gamma_1 \times \Gamma_2)$ but $1
\not\in \fix(\Gamma_1)+\fix(\Gamma_2)$. \begin{question} Is it
true that for all finite non-trivial groups $\Gamma_1$ and
$\Gamma_2$, $\fix(\Gamma_1)+\fix(\Gamma_2)= \fix(\Gamma_1 \times
\Gamma_2) \setminus \{1\}$?
\end{question}

\subsection{Abelian groups}

%\begin{lemma} \label{cyclic_product}
%If $\Gamma_1$ and $\Gamma_2$ are finite non-trivial abelian
%groups, then $\fix(\Gamma_1 \times
%\Gamma_2)=\fix(\Gamma_1)+\fix(\Gamma_2)\cup \{1\}$.
%\end{lemma}
%
%\begin{proof}
%We have containment in one direction by Lemmas
%\ref{fixing_number_one} and \ref{directproduct}.  Now suppose $k$
%is a positive integer in the set $\fix(\Gamma_1 \times \Gamma_2)$.
%We prove that $k$ is also in the set
%$\fix(\Gamma_1)+\fix(\Gamma_2)\cup \{1\}$.

\begin{lemma}\label{Zp}
If $p$ is prime and $k$ is a positive integer, then
$\fix(\mathbb{Z}_{p^k})=\{1\}$.
\end{lemma}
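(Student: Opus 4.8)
The plan is to prove the two inclusions separately. The inclusion $1 \in \fix(\mathbb{Z}_{p^k})$ is immediate from Corollary~\ref{fixing_number_one}, since $\mathbb{Z}_{p^k}$ is a non-trivial finite group. For the reverse inclusion, it suffices to show that \emph{every} graph $G$ with $\Aut(G) \cong \mathbb{Z}_{p^k}$ has $\fix(G) = 1$; note that $\fix(G) = 0$ is impossible because a graph with fixing number $0$ is rigid, whereas $\mathbb{Z}_{p^k}$ is non-trivial. So the whole content is the upper bound $\fix(G) \le 1$.

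The key structural fact I would exploit is that the subgroups of $\mathbb{Z}_{p^k}$ form a single chain $\{e\} < H_1 < H_2 < \cdots < H_k = \mathbb{Z}_{p^k}$, where $H_i$ is the unique subgroup of order $p^i$. In particular $H_1$ is the unique subgroup of order $p$, and $H_1 \le K$ for every non-trivial subgroup $K \le \mathbb{Z}_{p^k}$. Fixing a generator $g$ of $\Aut(G) \cong \mathbb{Z}_{p^k}$, we have $H_1 = \langle g^{p^{k-1}} \rangle$.

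Now apply Lemma~\ref{order_pk} to $g$, which has order $p^k$: there are vertices $v_1, \ldots, v_{p^k}$ on which $g$ acts as the cycle $(v_1\, v_2 \cdots v_{p^k})$, so that $g^j(v_1) = v_{1+j}$ with indices read modulo $p^k$. Since $1 \le p^{k-1} < p^k$, the generator $g^{p^{k-1}}$ of $H_1$ sends $v_1$ to $v_{1+p^{k-1}} \ne v_1$, so $g^{p^{k-1}} \notin \stab(v_1)$. Hence $\stab(v_1)$ is a subgroup of $\mathbb{Z}_{p^k}$ not containing $H_1$, and by the chain structure the only such subgroup is $\{e\}$. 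Therefore $\{v_1\}$ is a fixing set, giving $\fix(G) \le 1$, and combined with $\fix(G) \ge 1$ we get $\fix(G) = 1$. Since at least one graph with automorphism group $\mathbb{Z}_{p^k}$ exists (e.g.\ its Frucht graph, cf.\ Corollary~\ref{fixing_number_one}), this yields $\fix(\mathbb{Z}_{p^k}) = \{1\}$.

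I do not expect a serious obstacle; the argument is essentially the observation that a cyclic $p$-group has a unique minimal subgroup, which cannot stabilize a vertex lying on a full-length cycle of a generator. The only point requiring a moment's care is the degenerate case $k = 1$, where $g^{p^{k-1}} = g$ itself; the argument still applies since $g$ moves $v_1$ along the $p$-cycle.
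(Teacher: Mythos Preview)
Your proof is correct and follows essentially the same route as the paper: invoke Corollary~\ref{fixing_number_one} for the lower bound, then use Lemma~\ref{order_pk} on a generator of order $p^k$ to locate a vertex $v_1$ whose stabilizer is trivial. The only difference is in the last step: the paper observes that the $p^k$-cycle forces $|\orb(v_1)|=p^k$ and then applies the Orbit--Stabilizer Theorem to conclude $|\stab(v_1)|=1$ directly, whereas you argue via the unique minimal subgroup of $\mathbb{Z}_{p^k}$; the paper's version is a line shorter, but the content is the same.
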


\begin{proof}
By Corollary \ref{fixing_number_one}, $1 \in
\fix(\mathbb{Z}_{p^k})$.  Conversely, suppose that there exists a
graph $G$ such that $\Aut(G) = \mathbb{Z}_{p^k}$.  By Lemma
\ref{order_pk}, there exists a vertex in $G$ with orbit size
$p^k$. By the Orbit-Stabilizer Theorem, fixing this vertex must
fix the graph.
\end{proof}

Let $\Gamma$ be a finite abelian group with order $n$, and let
$n=p_1^{i_1} \cdots p_k^{i_k}$ be the prime factorization of $n$.
Recall that there is a unique factorization $\Gamma=\Lambda_1
\times \cdots \times \Lambda_k$, where $|\Lambda_j|=p_j^{i_j}$,
$\Lambda_j=\mathbb{Z}_{p_j^{\alpha_1}} \times \cdots \times
\mathbb{Z}_{p_j^{\alpha_t}}$, and $\alpha_1 + \ldots +
\alpha_t=i_j$.  The numbers $p_j^{\alpha_r}$ are called the
\textit{\textbf{elementary divisors}} of $\Gamma$ \cite{Dummit04}.

\begin{theorem}
Let $\Gamma$ be a finite abelian group, and let $k$ be the number
of elementary divisors of $\Gamma$.  Then $\fix(\Gamma)=\{1,
\ldots, k\}$. \label{abelian_theorem}
\end{theorem}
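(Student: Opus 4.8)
The plan is to prove the two inclusions $\fix(\Gamma) \subseteq \{1, \ldots, k\}$ and $\{1, \ldots, k\} \subseteq \fix(\Gamma)$ separately, building on the lemmas about $\mathbb{Z}_{p^k}$ and direct products.

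For the upper bound, suppose $G$ is a graph with $\Aut(G) = \Gamma$ and suppose we run the kind of chain-of-stabilizers argument used in Proposition~\ref{length}. Writing $\Gamma = \mathbb{Z}_{q_1} \times \cdots \times \mathbb{Z}_{q_k}$ where the $q_i = p_j^{\alpha_r}$ are the elementary divisors, the key observation is that the length $l(\Gamma)$ of an abelian group equals $k$ exactly when every $q_i$ is a prime power (each $\mathbb{Z}_{p^a}$ has a unique subgroup chain of length $a$, contributing $a$ to the length, and $\sum \alpha_r = i_j$, so summing over all primes recovers $k$ only if we count elementary divisors, not... wait, actually $l(\mathbb{Z}_{p^a}) = a$, so $l(\Gamma) = \sum_{\text{ed}} (\text{exponent})$, which is the number $k'$ of prime factors with multiplicity, not the number $k$ of elementary divisors). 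So Proposition~\ref{length} alone only gives $\max(\fix(\Gamma)) \le k'$, which is weaker than what we want. I would instead argue directly: each time we fix a vertex $v_i$ with nontrivial orbit under $\stab(\{v_1,\ldots,v_{i-1}\})$, that orbit has size divisible by some prime, but more is true — by the structure of abelian automorphism groups and Lemma~\ref{order_pk}, fixing a single vertex whose orbit is as large as possible kills an entire $\mathbb{Z}_{p^a}$ factor (as in Lemma~\ref{Zp}). So after at most $k$ steps all factors are killed. The main obstacle here is making precise the claim that one well-chosen vertex neutralizes a whole cyclic factor $\mathbb{Z}_{p^a}$; I would use that in an abelian group the stabilizer of $v$, being a subgroup, and the orbit-stabilizer relation force the orbit size to be a full $p^a$ if $v$ is moved by a generator of that factor's socle-complement — Lemma~\ref{order_pk} gives a vertex lying in a $p^a$-cycle of a generator of that factor.

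For the lower bound, I would proceed by induction on $k$, the number of elementary divisors. The base case $k = 1$ is exactly Lemma~\ref{Zp} (giving $\fix(\mathbb{Z}_{p^a}) = \{1\}$), together with Corollary~\ref{fixing_number_one} for $1 \in \fix(\Gamma)$ in general. For the inductive step, write $\Gamma = \Gamma' \times \mathbb{Z}_{p^a}$ where $\Gamma'$ has $k-1$ elementary divisors, so by induction $\fix(\Gamma') = \{1, \ldots, k-1\}$, and $\fix(\mathbb{Z}_{p^a}) = \{1\}$. By Lemma~\ref{graph_product} (applied once) we get $\fix(\Gamma') + \{1\} \subseteq \fix(\Gamma)$, i.e. $\{2, \ldots, k\} \subseteq \fix(\Gamma)$, and combined with $1 \in \fix(\Gamma)$ from Corollary~\ref{fixing_number_one} we obtain $\{1, \ldots, k\} \subseteq \fix(\Gamma)$. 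This part is essentially routine given the earlier results; the graph $A_k$ referenced in Figure~\ref{path_asym} presumably plays the role of a rigid asymmetric "tag" used to realize the boundary values, but the direct-product lemma already handles everything we need.

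Assembling the two inclusions gives $\fix(\Gamma) = \{1, \ldots, k\}$. I expect the genuine work to be entirely in the upper bound: one must rule out the possibility that some graph with automorphism group $\Gamma$ requires more than $k$ fixing vertices, and the subtlety is that a greedy choice of vertex might, a priori, only chip away at the automorphism group prime-by-prime rather than factor-by-factor. The resolution is to show that because $\Gamma$ is abelian, for each elementary divisor $p^a$ there is a vertex in a $p^a$-cycle (Lemma~\ref{order_pk}) whose stabilizer is exactly the product of the other cyclic factors, so fixing it reduces $\Gamma$ to $\Gamma / \mathbb{Z}_{p^a}$ in one stroke; iterating over the $k$ elementary divisors finishes it. I would need to be a little careful that the induced action of $\stab(v)$ on $G$ still has the right structure, but since stabilizers of abelian groups are themselves direct products of the remaining factors, this goes through cleanly.
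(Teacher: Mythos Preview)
Your lower-bound argument is correct and in fact tidier than the paper's explicit construction: Lemma~\ref{graph_product}, induction on $k$, and Corollary~\ref{fixing_number_one} together give $\{1,\ldots,k\}\subseteq\fix(\Gamma)$ immediately.

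The upper bound, however, has a real gap. You assert that a vertex $v$ lying in a $p^a$-cycle of a generator $g$ of one cyclic factor has stabilizer \emph{equal to} the product of the remaining factors, and later that ``stabilizers of abelian groups are themselves direct products of the remaining factors.'' Neither statement is true: all that follows from the $p^a$-cycle is $\stab(v)\cap\langle g\rangle=\{e\}$, and $\stab(v)$ can perfectly well be a diagonal subgroup. For instance, $\Gamma=\mathbb{Z}_2\times\mathbb{Z}_2$ can act faithfully on a graph so that some vertex $v$ in a $2$-cycle of $(1,0)$ has $\stab(v)=\langle(1,1)\rangle$, which lies in neither coordinate factor. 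What \emph{is} true is that $\stab(v)$ injects into $\Gamma/\langle g\rangle\cong\Gamma'$ via the quotient map, and hence has at most $k-1$ elementary divisors; but to exploit this you would have to recast the induction as a statement about arbitrary faithful actions of abelian groups on sets, since after fixing $v$ you no longer have in hand a graph whose full automorphism group is $\stab(v)$.

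The paper sidesteps both difficulties with a graph-modification trick rather than an action-theoretic one. It attaches a long rigid gadget ($Y_{|G|}$ or $A_{|G|}$ from Figure~\ref{path_asym}) to the chosen vertex $v_1$, producing a new graph $G'$ whose automorphism group is contained in $\stab_\Gamma(v_1)$ and hence is an abelian group with at most $k-1$ elementary divisors. Now the inductive hypothesis applies to $G'$ as stated, giving a fixing set of size at most $k-1$; adjoining $v_1$ yields a fixing set of $G$ of size at most $k$. So the role of the gadgets $A_k$ and $Y_k$ is not, as you guessed, merely to tag graphs in the lower-bound construction, but to carry the induction in the upper bound.
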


\begin{proof}Let $\Gamma=\Gamma_1 \times \ldots \times \Gamma_k$ be the elementary divisor
decomposition of $\Gamma$.  For every $1 \leq i \leq k$, let
$H_i=F(\Gamma_i \times \ldots \times \Gamma_k,\mathcal{G})$ be any
Frucht graph of $\Gamma_i \times \ldots \times \Gamma_k$.  There
are an infinite number of finite graphs with automorphism group
$\mathbb{Z}_n$ and fixing number 1; for example, every graph in
the family of graphs shown in Figure~\ref{5cycle} has automorphism
group $\mathbb{Z}_5$ and fixing number 1.  We may therefore let
$G_1, \ldots, G_k$ be distinct graphs, not isomorphic to $H_i$ for
any $i$, with automorphism groups $\Gamma_1, \ldots, \Gamma_k$,
respectively, and fixing number 1.  Let $G$ be the disjoint union
$(\bigcup_{j=1}^{i-1} G_j) \cup H_i$.  We also choose $G_1,
\ldots, G_k$ so that no automorphism of $G$ moves a vertex from
one $G_j$ to another, or from any $G_j$ to $H_i$, or vice versa.
The graphs shown in Figure~\ref{5cycle} are examples of graphs
$G_j$ which have this property.

Then $G$ has automorphism group $\Gamma$. Furthermore, every
fixing set of $G$ must include at least one vertex from each
subgraph $G_j$ and at least one vertex from $H_i$, and any set
with exactly one vertex moved by an automorphism from each $G_j$
and from $H_i$ is a fixing set of $G$. Therefore $\fix(G)=i$.
Since we have constructed a graph $G$ with $\Aut(G)=\Gamma$ and
$\fix(G)=i$ for any $1 \leq i \leq k$, $\{1, \ldots, k\} \subseteq
\fix(\Gamma)$.

\begin{figure}[h!]
\begin{center}\includegraphics[width=10cm]{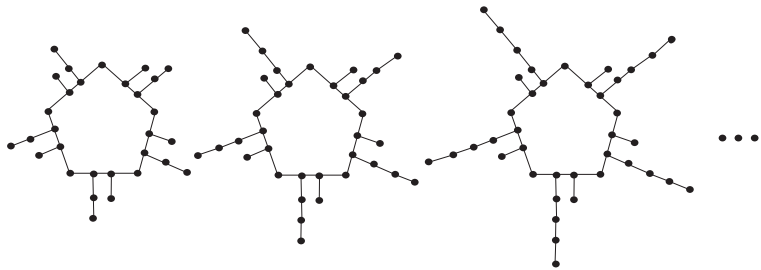}
\end{center} \caption{An infinite family of graphs with automorphism group $\mathbb{Z}_5$ and fixing number
1.}\label{5cycle}
\end{figure}

%Then $\Gamma$ is the automorphism group of the graph
%$F(\mathbb{Z}_{k_1}, 1) \cup F(\mathbb{Z}_{k_2}, 1) \cup \ldots
%\cup F(\mathbb{Z}_{k_i}, 1) \cup F(\mathbb{Z}_{k_{i+1}} \times
%\mathbb{Z}_{k_{i+2}} \times \ldots \times \mathbb{Z}_{k_n}, (1,1,
%\ldots, 1))$.  Since Each graph $F(\mathbb{Z}_n,1)$ This graph has
%fixing number $i+1$, since each Frucht graph has fixing number
%$1$, and they are all non-isomorphic.

We prove the reverse inclusion by induction.  Suppose $\Gamma$ is
a finite abelian group and $G$ is a finite graph with
$\Aut(G)=\Gamma$. If $\Gamma$ has one elementary divisor, then the
result follows from Lemma \ref{Zp}. Suppose that $\Gamma$ has
$k>1$ elementary divisors. We choose an elementary divisor $p^m$
of $\Gamma$.  Then $\Gamma=\mathbb{Z}_{p^m} \times \Gamma'$ for a
smaller finite abelian group $\Gamma'$. Let $g$ be a generator of
the subgroup $\mathbb{Z}_{p^m}$ of $\Gamma$. By Lemma
\ref{order_pk}, there exists a set of $p^m$ vertices $v_1, \ldots,
v_{p^m}$ in $G$ such that, as a permutation of the vertices of
$G$, $g$ contains the cycle $(v_1 \ldots v_{p^m})$.

Let $H$ be the connected component of $G$ containing $v_1$.  If
$H$ is a tree, let $G'$ be the graph obtained from $G$ by
attaching the graph $A_{|G|}$ shown in Figure~\ref{path_asym} to
$G$ by identifying the vertex $a$ in $A_{|G|}$ with the vertex
$v_1$ in $G$.  Otherwise, let $G'$ be the graph obtained from $G$
by attaching the graph $Y_{|G|}$ shown in Figure~\ref{path_asym}
to $G$ by identifying the vertex $a$ in $Y_{|G|}$ with the vertex
$v_1$ in $G$.  Denote the subgraph $A_{|G|}$ or $Y_{|G|}$ in $G'$
by $H'$. We claim that $\Aut(G')$ is a subgroup of $\Gamma'$.
First, we show that $G'$ does not have any additional
automorphisms that $G$ does not have. Suppose $h$ is an
automorphism of $G'$ and not $G$. So $h$ must move some vertex of
$H'$.  Since $H'$ has no automorphisms itself, $h$ must move all
of its vertices. Furthermore, since $H'$ has more vertices than
$G$, $h$ must send a vertex of $H'$ to another vertex of $H'$.
This means that as a permutation of the vertices of the component
$H \cup H'$, $h$ is completely determined: $h$ must be a flip of
$H \cup H'$ about some vertex of $H'$. This cannot happen, since
by construction $H'$ contains a cycle if and only if $H$ does not.

Second, $v_1$ has larger degree in $G'$ than in $G$, so there are
no automorphisms of $G'$ mapping $v_1$ to any other vertex $v_2$,
$\ldots$, $v_{p^m}$. Since $g$ maps $v_1$ to $v_2$, $g$ does not
extend to any automorphism of $G'$.

Hence by induction $G'$ has fixing number at most $k-1$. If $S$ is
a fixing set of $G'$ with $|S| \leq k-1$, then $S'=S \cup \{v_1\}$
is a fixing set of $G$ with $|S'| \leq k$. Therefore $G$ has
fixing number at most $k$, and $\fix(\Gamma)=\{1, \ldots, k\}$.
\end{proof}

\subsection{Symmetric groups}

The \textbf{\emph{inflation}} of a graph $G$, $\Inf(G)$, is a
graph with a vertex for each ordered pair $(v,e)$, where $v$ and
$e$ are a vertex and an edge of $G$, and $v$ and $e$ are incident.
$\Inf(G)$ has an edge between $(v_1,e_1)$ and $(v_2,e_2)$ if $v_1
= v_2$ or $e_1 = e_2$. We denote the $k$-fold inflation of the
graph $G$ by $\Inf^k(G)$.

\begin{figure}[h!]
\begin{center}\includegraphics[width=4cm]{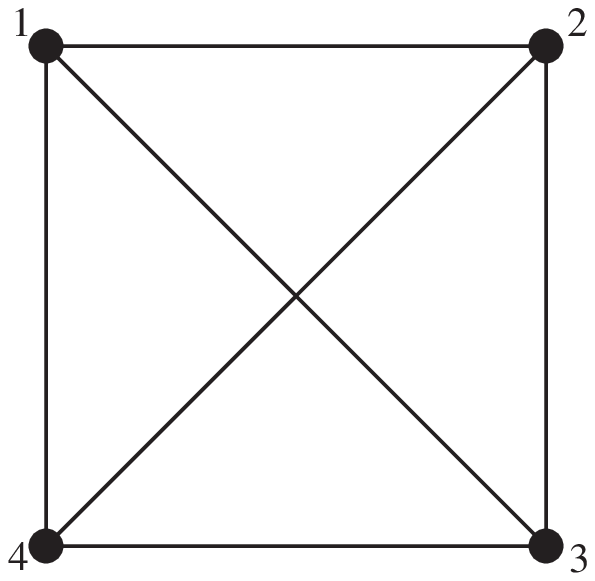}
\includegraphics[width=4cm]{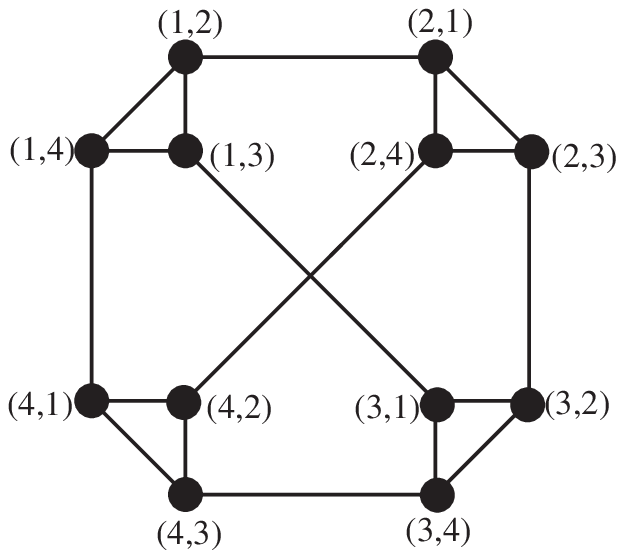}
\includegraphics[width=8cm]{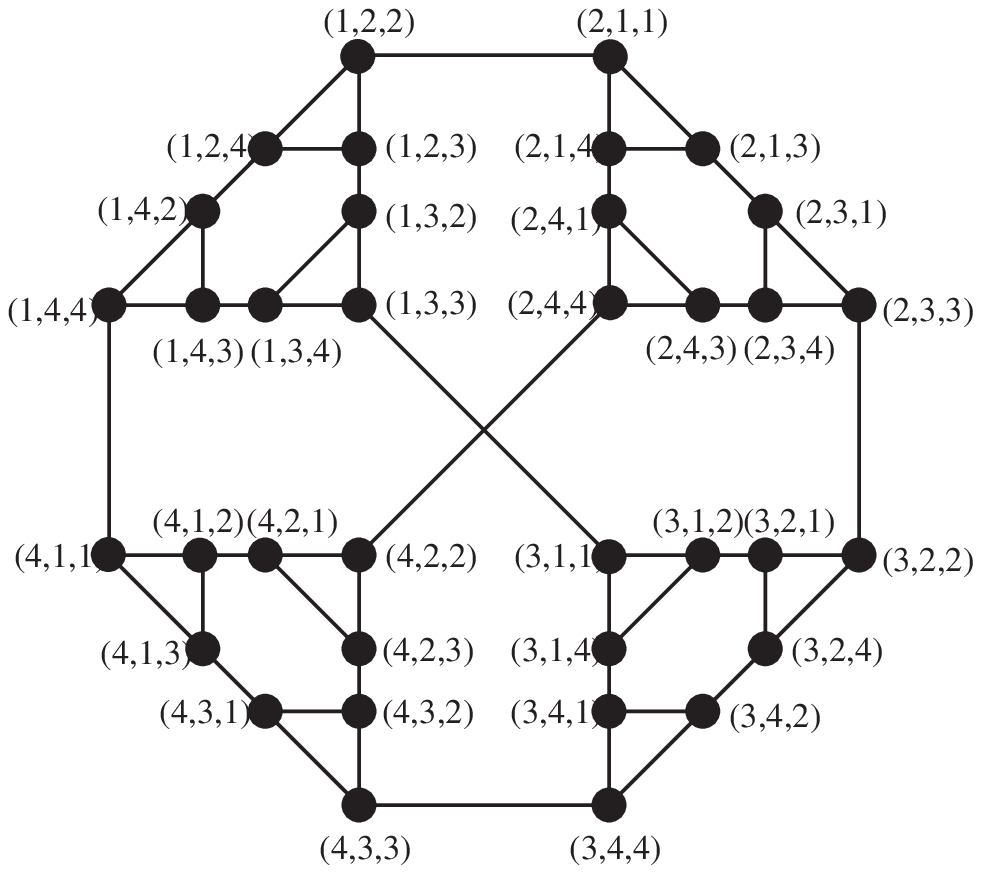}
\end{center} \caption{The graph $K_4$ and its first and second inflations.}
\end{figure}

For a positive integer $n$, let $G_k$ be the graph with a vertex
for each sequence $(x_1, \ldots, x_{k+1})$ of $k+1$ integers from
the set $\{1, \ldots, n\}$ with $x_1$ different from the remaining
integers in the sequence.  Vertices $u = (u_1, ..., u_{k+1})$ and
$v = (v_1, ...,v_{k+1})$ are adjacent if and only if there exists
some index $i$ such that $u_j = v_j$ for all $j < i$, $u_i \not =
v_i$, and $u_j = v_i$ and $v_j = u_i$ for all $j>i$.

\begin{lemma}\label{labeling}
The graphs $G_k$ and $\Inf^k(K_n)$ are isomorphic.
\end{lemma}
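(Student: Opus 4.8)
The plan is to prove Lemma~\ref{labeling} by induction on $k$, exhibiting an explicit isomorphism $\varphi_k \colon V(G_k) \to V(\Inf^k(K_n))$ that respects the recursive structure of the inflation operation. First I would set up the base case $k=0$: the graph $G_0$ has a vertex for each length-one sequence $(x_1)$ with $x_1 \in \{1,\dots,n\}$ and no adjacency constraints to check (the condition on indices $i$ is vacuous or reduces to nonadjacency), so $G_0$ is the edgeless graph on $n$ vertices — wait, I should double-check against $\Inf^0(K_n) = K_n$, which means I instead need to read the indexing so that $k=1$ gives the first inflation; so the genuine base case is $k=1$, where a vertex of $G_1$ is a pair $(x_1,x_2)$ with $x_1 \neq x_2$, i.e. an ordered incident (vertex, edge) pair of $K_n$ with the vertex recorded first and the edge recorded as its other endpoint, and I would verify the adjacency rule for $G_1$ matches ``$v_1=v_2$ or $e_1=e_2$'' for $\Inf(K_n)$.

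The heart of the argument is the inductive step. I would describe the vertex set of $\Inf^{k}(K_n)$ as follows: a vertex of $\Inf(H)$ is an incident (vertex, edge) pair of $H$, so iterating, a vertex of $\Inf^k(K_n)$ is a ``flag'' recording a vertex of $K_n$, then an edge through it, then inside the next inflation a vertex (= one of two choices at the previous flag) together with an edge, and so on. The key observation is that each sequence $(x_1,\dots,x_{k+1})$ with $x_1$ distinct from $x_2,\dots,x_{k+1}$ encodes such a flag: $x_1$ is the ``current vertex'' at every level, the pair $\{x_1,x_2\}$ is the edge at the outermost level, and at each successive level the entry $x_{i+1}$ records which neighbor-edge to descend into. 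I would make this precise by showing $V(G_k)$ biject with $\{(v,e) : v \in V(G_{k-1}), e \in E(G_{k-1}), v \text{ incident to } e\}$: given $(x_1,\dots,x_{k+1})$, the truncation $(x_1,\dots,x_k)$ is a vertex of $G_{k-1}$, and $x_{k+1}$ determines an edge of $G_{k-1}$ incident to it, namely the edge joining $(x_1,\dots,x_k)$ to the vertex obtained by the swap at index $k$ dictated by the adjacency rule. The main work is then checking that two vertices $u,v$ of $G_k$ are adjacent in $G_k$ — i.e. the smallest index $i$ where they differ behaves as the rule specifies — precisely when, writing $u \leftrightarrow (u', e_u)$ and $v \leftrightarrow (v', e_v)$, either $u' = v'$ in $G_{k-1}$ (they share the last coordinate's ``vertex part'', corresponding to differing only at coordinate $k+1$) or $e_u = e_v$ (they came from the same edge of $G_{k-1}$, corresponding to the first difference occurring at an index $\le k$ and the tails being swapped).

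The step I expect to be the main obstacle is verifying that the ``swap'' condition in the definition of adjacency in $G_k$ — the requirement that $u_j = v_i$ and $v_j = u_i$ for all $j > i$ — correctly tracks the two-element choice at each level of the inflation, in both directions. Concretely: if $u$ and $v$ agree up to index $i-1$ and differ at $i$, then as flags they agree as vertices of $\Inf^{i-1}(K_n)$ but the $i$-th coordinate picks out two different edges through that common vertex; for $u$ and $v$ to be adjacent we then need the remaining coordinates of each to be ``forced'' — and the swap condition is exactly the statement that from index $i$ onward, $u$ and $v$ are the two endpoints, at every subsequent inflation level, of the single edge in $\Inf^{j}(K_n)$ recording ``same vertex, same edge up through level $i$, and these two flags differ only in their vertex-part at level $i$.'' I would handle this by an auxiliary claim: for vertices of $G_k$ that agree on the first $i-1$ coordinates and have prescribed distinct values at coordinate $i$, the adjacency-forced extension to coordinates $i+1,\dots,k+1$ is unique and is given precisely by the coordinate-swap, and this matches the fact that in $\Inf^{j}(K_n)$ a vertex together with the ``edge that flips only the level-$i$ vertex choice'' has a unique partner. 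Once that claim is in hand, the two adjacency cases (first difference at an index $< i$ versus difference only at the last index) align with ``same edge'' versus ``same vertex'' in $\Inf(G_{k-1})$, closing the induction; I would also remark that the construction shows $\varphi_k$ is a bijection on vertices by a cardinality count, since $|V(G_k)| = n(n-1)^k = |V(\Inf^k(K_n))|$.
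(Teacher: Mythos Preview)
Your approach matches the paper's: both proceed by induction on $k$, encoding a vertex $(v',e')$ of $\Inf^k(K_n)=\Inf(\Inf^{k-1}(K_n))$ as the length-$k$ sequence for $v'$ with one extra coordinate recording the edge $e'$, and then splitting the adjacency check into the same two cases (same vertex $v'$, i.e.\ the sequences differ only in the last coordinate; or same edge $e'$, i.e.\ the first difference occurs at an earlier index with the tails swapped). Your base-case hesitation is unnecessary, however: in $G_0$ the adjacency rule at $i=1$ reads simply $u_1\neq v_1$ (the conditions for $j<1$ and $j>1$ are vacuous), so $G_0$ is the complete graph $K_n=\Inf^0(K_n)$ and there is no indexing shift---you can take $k=0$ as the base case directly.
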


\begin{proof} We define an isomorphism $\varphi:\Inf^k(K_n) \to G_k$
inductively.  For the base case, note that $Inf^0(K_n) \cong G_0
\cong K_n$. Now assume $\varphi':\Inf^{k-1}(K_n) \to G_{k-1}$ is
an isomorphism, and suppose that $v$ is a vertex in $\Inf^k(K_n)$.
By the definition of the inflation, $v=(v',e')$, where $v'$ is a
vertex in $\Inf^{k-1}(K_n)$ and $e'$ is an edge in
$\Inf^{k-1}(K_n)$. So $\varphi'(v')=(a_1, \ldots, a_k)$ and
$e'=\{v',u'\}$ where $\varphi'(u') = (b_1, \ldots, b_k)$, for two
vertices $(a_1, \ldots, a_k)$ and $(b_1, \ldots, b_k)$ in
$G_{k-1}$.  Since $v' \sim u'$, by the definition of $G_{k-1}$,
there exists an index $1 \leq i \leq k$ such that $a_j = b_j$ for
all $1 \leq j < i$, $a_i \not = b_i$, and $a_j = b_i$ and $b_j =
a_i$ for all $i< j \leq k$.  We define $\varphi(v)=(a_1, \ldots,
a_k, b_i)$.  Note that since $\varphi'$ is a bijection by
induction, it is easy to see that $\varphi$ is a bijection as
well.

We now prove that $\varphi$ is an isomorphism.  First suppose that
$v$ and $u$ are adjacent vertices of $\Inf^k(K_n)$.  By the
definition of inflation, $v=(v',e')$ and $u=(u',d')$ for two
vertices $v'$ and $u'$ in $\Inf^{k-1}(K_n)$ and two edges $e'$ and
$d'$ in $\Inf^{k-1}(K_n)$ incident to $v'$ and $u'$, respectively.
By the definition of adjacency in $\Inf^k(K_n)$, either $v'=u'$ or
$e'=d'$.

\noindent \textbf{Case 1. $v'=u'$.}  In this case,
$\varphi'(v')=\varphi'(u')=(a_1, \ldots, a_k)$, so $\varphi(v)$
and $\varphi(u)$ differ only in their last coordinate.  Therefore
$\varphi(v) \sim \varphi(u)$ by the definition of adjacency in
$G_k$.

\noindent \textbf{Case 2. $e'=d'$.}  Since $e'$ is incident to
$v'$ and $d'$ is incident to $u'$, $e'=d'$ must be the edge
between the vertices $v'$ and $u'$.  So $\varphi'(v') \sim
\varphi'(u')$, hence $\varphi'(v')$ and $\varphi'(u')$ must
satisfy the definition of adjacency in $G_{k-1}$.  By the
definition of $\varphi$, $\varphi(v)$ and $\varphi(u)$ are still
adjacent in $G_k$.

Now suppose that $v$ and $u$ are non-adjacent vertices of
$\Inf^k(K_n)$, and again let $v=(v',e')$ and $u=(u',d')$.  By the
definition of adjacency in $\Inf^k(K_n)$, $v' \neq u'$ and $e'
\neq d'$.

\noindent \textbf{Case 1. $v'$ is not adjacent to $u'$}.  So
$\varphi'(v') \not\sim \varphi'(u')$, so the sequences
$\varphi'(v')$ and $\varphi'(u')$ do not satisfy the definition of
adjacency in $G_{k-1}$.  Since $\varphi(v)$ and $\varphi(u)$ are
formed from $\varphi'(v')$ and $\varphi'(u')$ by appending an
extra number to their sequences, the new sequences $\varphi(v)$
and $\varphi(u)$ still do not satisfy the definition of adjacency
in $G_k$.

\noindent \textbf{Case 2. $v'$ is adjacent to $u'$.} Since $v'
\neq u'$, $\varphi'(v')$ and $\varphi'(u')$ differ in their $k$th
coordinate.  But since $e' \neq d'$, either the $(k+1)$st
coordinate of $\varphi(v)$ differs from the $k$th coordinate of
$\varphi(v)$, or the $(k+1)$st coordinate of $\varphi(u)$ differs
from the $k$th coordinate of $\varphi(u)$.  Therefore $\varphi(v)$
is not adjacent to $\varphi(u)$ in $G_k$.

\end{proof}

By Lemma \ref{labeling}, we may label the vertices of
$\Inf^k(K_n)$ using the vertices of $G_k$, and follow the rule for
adjacency of vertices in $\Inf^k(K_n)$ given by the definition of
$G_k$.  We do this for the remainder of this section.

\begin{theorem}
For $n>3$ and $k \geq 0$, $\Aut(\Inf^k(K_n))=S_n$ and
$\fix(\Inf^k(K_n)) = \lceil\frac{n-1}{k+1}\rceil$.
\label{inflation}
\end{theorem}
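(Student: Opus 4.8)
The proof has two largely independent parts --- identifying $\Aut(\Inf^k(K_n))$ and then computing the fixing number --- and the plan is to get the automorphism group from a single general lemma applied iteratively. I would first prove: if $H$ is a simple graph with minimum degree at least $3$, then the natural map $\Aut(H)\to\Aut(\Inf(H))$ (an automorphism of $H$ permutes vertices and edges, hence the incident vertex--edge pairs) is an isomorphism. Think of the vertices of $\Inf(H)$ as the darts of $H$: the darts at a vertex $v$ form a clique $C_v$ of size $\deg_H(v)$, and two darts at different vertices are adjacent in $\Inf(H)$ only when they are the two darts of a single edge. The key structural fact is that every clique $K$ of $\Inf(H)$ with $|K|\ge 3$ lies in a single $C_v$: splitting $K$ according to the vertex each dart lies on, if two of the parts were nonempty then --- since each dart lies on a unique edge --- each part would contain at most one dart, making all of $K$ pairwise joined by common edges, impossible for $|K|\ge 3$ since the common-edge relation is a matching. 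As $\delta(H)\ge 3$ makes each $C_v$ a maximal clique of size $\ge 3$, the sets $C_v$ are \emph{exactly} the maximal cliques of size $\ge 3$, so any $\phi\in\Aut(\Inf(H))$ permutes them and induces $\bar\phi\in\Aut(H)$ with $\phi(C_v)=C_{\bar\phi(v)}$ (it respects adjacency because $vw\in E(H)$ iff an edge of $\Inf(H)$ joins $C_v$ to $C_w$); and $\phi\mapsto\bar\phi$ inverts the natural map, since if $\phi$ fixes every $C_v$ setwise then for each edge $\{v,w\}$ the dart $(v,\{v,w\})$ is the unique dart of $C_v$ adjacent to a dart of $C_w$ and so is fixed by $\phi$.

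Next I would apply the lemma iteratively. Because $\deg_{\Inf(H)}((v,e))=\deg_H(v)$, inflation preserves $(n-1)$-regularity, so every $\Inf^j(K_n)$ is a simple $(n-1)$-regular graph with $n-1\ge 3$, and the lemma gives $\Aut(\Inf^k(K_n))=\Aut(\Inf^{k-1}(K_n))=\cdots=\Aut(K_n)=S_n$. The lemma also matches each automorphism with the one it naturally induces, so, reading $\Inf^k(K_n)$ as $G_k$ via Lemma~\ref{labeling}, $S_n$ acts by the diagonal relabeling $\sigma\cdot(x_1,\dots,x_{k+1})=(\sigma x_1,\dots,\sigma x_{k+1})$; in particular $\stab(x_1,\dots,x_{k+1})=\{\sigma\in S_n:\sigma x_i=x_i\text{ for all }i\}$.

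For the fixing number I would work entirely in the $G_k$ model. Given a vertex set $S$, the formula above shows $\stab(S)$ is the subgroup of $S_n$ fixing pointwise the set $T$ of all coordinate values occurring among vertices of $S$, and this is trivial exactly when $|T|\ge n-1$ (if $|T|\le n-2$ a transposition of two symbols outside $T$ is a nontrivial element). Thus a fixing set is precisely a vertex set whose coordinates cover at least $n-1$ symbols. Since each vertex has only $k+1$ coordinates, $|T|\le(k+1)|S|$, giving the lower bound $|S|\ge\lceil(n-1)/(k+1)\rceil$. For the matching upper bound I would partition $\{1,\dots,n-1\}$ into $\lceil(n-1)/(k+1)\rceil$ blocks of size at most $k+1$ and for each block exhibit a vertex of $G_k$ whose coordinate set contains that block --- padding a short block by repeating one of its entries, and for a singleton block $\{a\}$ using $(c,a,\dots,a)$ with $c\ne a$ (which exists since $n>3$) --- so that these vertices form a fixing set of size $\lceil(n-1)/(k+1)\rceil$.

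The one genuinely delicate step is the automorphism lemma, and within it the clique argument that recovers the cliques $C_v$ intrinsically together with the check that $\phi\mapsto\bar\phi$ is a two-sided inverse of the natural map. After that, both directions of the fixing-number computation are short counting arguments.
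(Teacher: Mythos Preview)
Your proof is correct and follows essentially the same strategy as the paper: both arguments identify the vertex-cliques $C_v$ as the maximal cliques of the inflation, use this to show that every automorphism of $\Inf^k(K_n)$ descends to one of $\Inf^{k-1}(K_n)$ and hence (by induction) lies in $S_n$, and then compute the fixing number via the coordinate-covering argument in the $G_k$ model. Your packaging of the inductive step as a standalone lemma valid for any $H$ with $\delta(H)\ge 3$ is a bit more general and makes the clique characterization more explicit than the paper's treatment, but the underlying idea is the same.
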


\begin{proof}
The statement is clear for $k=0$, so assume $k>0$.  Since each
vertex of $\Inf^k(K_n)$ is labeled with a sequence of the numbers
$\{1, \ldots, n\}$ of length $k+1$ by Lemma~\ref{labeling}, every
permutation $g$ in $S_n$ induces a natural permutation of the
vertices of $\Inf^k(K_n)$.  Again by Lemma~\ref{labeling}, it is
easy to see that these permutations are all automorphisms of
$\Inf^k(K_n)$.
%So in fact $\varphi$ is a homomorphism, and since
%$\varphi$ is one-to-one,  $S_n \cong \varphi(S_n) <
So $S_n < \Aut(\Inf^k(K_n))$.

Now suppose that $g \in \Aut(\Inf^k(K_n))$. We show that $g$ is
determined as a permutation of the numbers 1 through $n$ in the
labeling sequences of the vertices of $\Inf^k(K_n)$, and therefore
$g \in S_n$. Suppose $v=(a_1, \ldots, a_{k+1})$ and $w=(b_1,
\ldots, b_{k+1})$ are two vertices in $\Inf^k(K_n)$. By the
definition of adjacency in $G_k$, if $a_i=b_i$ for $1 \leq i \leq
k$, then $v$ and $w$ are adjacent. Therefore if we partition
$\Inf^k(K_n)$ into blocks of vertices with the same first $k$
elements in their labeling sequence, each block forms a maximal
clique of $\Inf^k(K_n)$.  The graph formed by contracting each of
these maximal cliques to a single vertex is $\Inf^{k-1}(K_n)$.
Since maximal cliques are preserved under automorphisms, the
automorphism $g$ induces a natural automorphism $g'$ on
$\Inf^{k-1}(K_n)$.  By induction, $g'$ is determined as a
permutation $p$ of the numbers 1 through $n$ in the labeling
sequences of the vertices of $\Inf^{k-1}(K_n)$.  Now $g$ is
determined by the same permutation $p$, since the action of $p$ on
$(a_1, \ldots, a_k)$ determines which maximal clique contains
$g(v)$, and the action of $p$ on $a_{k+1}$ determines $g(v)$
within that maximal clique.

By the definition of the correspondence between an element $g$ of
$\Aut(\Inf^k(K_n))$ and its corresponding permutation $p$ in
$S_n$, for any vertex $v=(a_1, \ldots, a_{k+1})$ of $\Inf^k(K_n)$,
$g(v)=v$ if and only if $p(a_i)=a_i$ for all $1 \leq i \leq k+1$.
Therefore $\stab(v)=\stab(\{a_1, \ldots, a_{k+1}\})$. This means
that any set of vertices whose vertex labels include the set $\{1,
\ldots, n-1\}$ is a fixing set of $\Inf^k(K_n)$.  One such set is
$\{(1, \ldots, k+1), (k+2, \ldots, 2k+1), \ldots, (mk+m+1, \ldots,
mk+m+k+1), (n-k-1, \ldots, n-1)\}$, where
$m=\lfloor\frac{n-1}{k+1}\rfloor$.  This set has
$\lceil\frac{n-1}{k+1}\rceil$ vertices. Conversely, any set $S$ of
vertices whose vertex labels do not include any two of the numbers
1 through $n$, say $i$ and $j$, cannot be a fixing set, since the
element of $\Aut(\Inf^k(K_n))$ corresponding to the transposition
$(i,j)$ is a non-identity element of the stabilizer of $S$. This
clearly requires at least $\lceil\frac{n-1}{k+1}\rceil$ vertices,
so $\fix(\Inf^k(K_n))=\lceil\frac{n-1}{k+1}\rceil$.

\end{proof}

\noindent It seems likely that the proof of
Theorem~\ref{inflation} could extend to inflations of graphs other
than $K_n$.  However, since $\Inf^k(C_n)=C_{2^kn}$,
$\fix(\Inf^k(C_n))=2$ for all $k \geq 0$ and $n \geq 3$.  This
motivates the following question.

\begin{question}
For which graphs $G$ is it true that $\fix(\Inf^k(G))=
\lceil\frac{\fix(G)}{k+1}\rceil$?
\end{question}

\begin{figure}[h!]
\begin{center}\includegraphics[width=4cm]{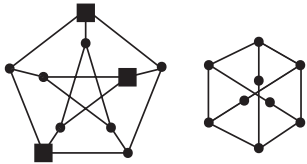}
\end{center} \caption{The Petersen graph with a fixing set shown as square
vertices, and the Petersen graph with one vertex deleted.}\label{pete_fig}
\end{figure}

\begin{proposition}
The Petersen graph $P$ has automorphism group $S_5$ and fixing
number 3. \label{pete}
\end{proposition}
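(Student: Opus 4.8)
The plan is to use the standard description of the Petersen graph $P$ as the Kneser graph $K(5,2)$: the vertices are the $2$-element subsets of $[5]=\{1,2,3,4,5\}$, and two vertices are adjacent exactly when the corresponding subsets are disjoint. (One first checks this graph really is the Petersen graph, e.g.\ that it is $3$-regular on $10$ vertices with girth $5$.) Everything then reduces to understanding the natural $S_5$-action on $2$-subsets.

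First I would show $\Aut(P)=S_5$. The group $S_5$ acts on $2$-subsets by $\sigma\cdot\{i,j\}=\{\sigma i,\sigma j\}$; this preserves disjointness, so it gives a homomorphism $S_5\to\Aut(P)$, and it is faithful since a permutation fixing every $2$-subset setwise fixes every point (for $j\neq k$ both different from $i$ we get $\sigma(i)\in\{i,j\}\cap\{i,k\}=\{i\}$). Hence $S_5\le\Aut(P)$; in particular $P$ is vertex-transitive, so $|\Aut(P)|=10\,|\stab(v)|$ and it suffices to prove $|\stab(v)|\le 12$ for $v=\{1,2\}$. For this I would examine the six non-neighbours of $v$, namely the $2$-subsets meeting $\{1,2\}$, and check that they induce a $6$-cycle ($\{1,i\}\sim\{2,j\}$ iff $i\neq j$, for $i,j\in\{3,4,5\}$). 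Every element of $\stab(v)$ permutes this $6$-cycle, giving a homomorphism $\stab(v)\to\Aut(C_6)$, a group of order $12$; this map is injective because an automorphism fixing $v$ and all six non-neighbours of $v$ also fixes each neighbour (each neighbour of $v$, such as $\{3,4\}$, is the unique common neighbour of a suitable pair of already-fixed vertices, here $\{1,5\}$ and $\{2,5\}$). Thus $|\stab(v)|\le 12$, so $|\Aut(P)|\le 120=|S_5|$, and therefore $\Aut(P)=S_5$ acting naturally on $2$-subsets.

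Second I would compute $\fix(P)$ from this action. An automorphism $\sigma\in S_5$ fixes the vertex $\{i,j\}$ iff it fixes $\{i,j\}$ setwise, so $\stab(\{i,j\})=\langle(i\,j)\rangle\times\Sym([5]\setminus\{i,j\})$ has order $12$; hence no single vertex fixes $P$. For a two-element set $\{\{i,j\},\{k,l\}\}$ there are two cases: if the two $2$-subsets share a point, the common stabilizer still contains a transposition of the two unused points; if they are disjoint, it contains $\langle(i\,j)\rangle\times\langle(k\,l)\rangle$. In either case it is nontrivial, so $\fix(P)\ge 3$. For the matching upper bound, the set $\{\{1,2\},\{1,3\},\{1,4\}\}$ works: an automorphism fixing all three setwise must fix $1$ (otherwise $\sigma(1)$ would have to lie in all three $2$-subsets), hence fix $2,3,4$, hence fix $5$, hence be the identity. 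Therefore $\fix(P)\le 3$, and $\fix(P)=3$.

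The step I expect to be the main obstacle is the upper bound $\Aut(P)\le S_5$ — choosing the right structural invariant at a vertex (here, that its non-neighbourhood is a $6$-cycle and that its neighbours are recoverable from its non-neighbours) and verifying injectivity of the induced action. Two alternatives for this step: (i) show that the only size-$4$ independent sets of $P$ are the five stars $S_i=\{\{i,j\}:j\neq i\}$, note that $\Aut(P)$ permutes them, and use $S_i\cap S_j=\{\{i,j\}\}$ to see this permutation action is faithful, then combine with $S_5\le\Aut(P)$; or (ii) observe that $P$ is the complement of the line graph $L(K_5)$ and invoke Whitney's theorem. The remaining ingredients — the stabilizer description and the casework for sets of size $1$ and $2$ — are routine.
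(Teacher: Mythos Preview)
Your argument is correct and self-contained. The paper takes a somewhat different path: it simply cites the literature for $\Aut(P)\cong S_5$, exhibits a $3$-element fixing set in a figure, and for the lower bound fixes one vertex $v_1$ and observes that the residual problem is to fix $P-v_1$, whose automorphism group coincides with $\Aut(C_6)$ (so has fixing number~$2$). Interestingly, the structural fact you isolate to bound $|\Aut(P)|$ --- that the six non-neighbours of a vertex induce a $6$-cycle which already controls the neighbours --- is exactly what the paper deploys, but for the fixing-number lower bound rather than for computing $\Aut(P)$. Your Kneser model makes both halves explicit: the stabilizer of a $2$-subset is visibly $S_2\times S_3$, and the two-vertex case analysis replaces the paper's appeal to $\fix(P-v_1)=\fix(C_6)=2$. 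The trade-off is that your route is longer but cites nothing, while the paper's is terser but leans on an external reference for $\Aut(P)$ and on identifying $\Aut(P-v_1)$ with $\Aut(C_6)$.
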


\begin{proof}
Many proofs that $\Aut(P)=S_5$ appear in the literature; one can
be found in \cite{Beineke97}.  A fixing set of $P$ with 3 vertices
is shown in Figure~\ref{pete_fig}.  It remains to show that any
fixing set of $P$ has at least 3 vertices. Suppose that $S=\{v_1,
\ldots, v_k\}$ is a fixing set of $P$.  Since $P$ is
vertex-transitive \cite{Beineke97}, we may choose $v_1$ to be any
vertex of $P$. Since automorphisms in $\stab(v_1)$ preserve
distance from $v_1$, any element of $\stab(v_1)$ must permute the
three vertices adjacent to $v_1$ among themselves, and the six
vertices that are distance two from $v_1$ among themselves. Since
automorphisms of $P-v_1$ also have this property, fixing the rest
of $P$ is equivalent to fixing the graph $P-v_1$. This graph is
shown in Figure~\ref{pete_fig}, and has fixing number 2 since its
automorphisms are the same as the automorphisms of $C_6$.
\end{proof}

%\begin{lemma}Let $i_1, \ldots, i_k$ be the set of integers less
%than or equal to $n$ which are prime powers, let $m=n/(i_1 \cdot
%i_2 \cdots i_k)$, and let $j$ be the number of prime factors of
%$m$, counting multiplicities. Then $\max(\fix(S_n)) \leq k+j$.
%\label{prime_factors}
%\end{lemma}
\begin{lemma}For any positive integer $n$, if $i$ is a prime power dividing $n!$,
and $j$ is the number of prime factors of $n!/i$, counting
multiplicities, then $\max(\fix(S_n)) \leq j+1$.
\label{prime_factors}
\end{lemma}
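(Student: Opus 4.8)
The plan is to show that every graph $G$ with $\Aut(G)=S_n$ can be fixed using at most $j+1$ vertices, by spending one vertex to exploit the prime power $i$ and then running the greedy/chain argument of Proposition~\ref{length} on the resulting point stabilizer. Write $i=p^k$, and for a positive integer $m$ let $\Omega(m)$ denote the number of prime factors of $m$ counted with multiplicity, so that $j=\Omega(n!/i)=\Omega(n!)-k$.

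First I would produce an element $g\in S_n$ of order $i$ — such an element exists whenever $i\le n$ (take a $p^k$-cycle), which is the situation in which Lemma~\ref{order_pk} applies. By Lemma~\ref{order_pk}, as a permutation of $V(G)$ the element $g$ contains a cycle $(v_1\,v_2\cdots v_{p^k})$ on some set of $p^k$ vertices of $G$. I fix $v_1$ first. Since $g,g^2,\dots,g^{p^k-1}$ all move $v_1$, no nontrivial power of $g$ lies in $\stab(v_1)$, so $\langle g\rangle\cap\stab(v_1)=\{e\}$; in particular $|\orb(v_1)|\ge p^k=i$.

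Next I would continue exactly as in the proof of Proposition~\ref{length}, but starting from $H:=\stab(v_1)$ rather than from $S_n$: repeatedly choose a vertex whose orbit under the current stabilizer has more than one element, and fix it. This produces a strictly descending chain $\{e\}<\cdots<\stab(\{v_1,v_2\})<\stab(v_1)=H$, and since $H$ acts faithfully on $V(G)\setminus\{v_1\}$, the process terminates at the trivial group. Each strict inclusion in this chain multiplies the order of the subgroup by an integer at least $2$ (Lagrange), so the number of vertices fixed after $v_1$ is at most $\Omega(|H|)$; hence $\fix(G)\le 1+\Omega\bigl(|\stab(v_1)|\bigr)$. This part is just the bookkeeping already behind Corollary~\ref{factorization}.

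It therefore remains to show $\Omega(|\stab(v_1)|)\le j$, equivalently (as $|\stab(v_1)|=n!/|\orb(v_1)|$) that $\Omega(|\orb(v_1)|)\ge k=\Omega(i)$; that is, the orbit of $v_1$ must account for all $k$ copies of $p$ that $i$ contributes to $|S_n|=n!$. This is the step I expect to be the main obstacle. It is \emph{not} automatic from $|\orb(v_1)|\ge p^k$ and $|\orb(v_1)|\mid n!$ alone — a divisor of $n!$ can exceed $p^k$ while having fewer than $k$ prime factors (e.g. a prime larger than $p^k$). To close the gap I would work $p$-locally: take a Sylow $p$-subgroup $Q$ of $\stab(v_1)$ together with $\langle g\rangle$, and try to arrange — by choosing $v_1$ of largest orbit, by choosing which cycle of $g$ to use, and by replacing $g$ with a suitable conjugate (harmless, since Lemma~\ref{order_pk} needs only \emph{some} element of order $i$) — that $Q$ and $\langle g\rangle$ lie in a common Sylow $p$-subgroup of $S_n$. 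Then $|\langle g\rangle|\cdot|Q|\le |S_n|_p$, so $v_p(|\stab(v_1)|)\le v_p(n!)-k$, giving $v_p(|\orb(v_1)|)\ge k$ and hence $\Omega(|\orb(v_1)|)\ge k$, which yields $\fix(G)\le 1+j$. Controlling this Sylow interaction (and identifying exactly which hypothesis on the prime power $i$ makes it go through) is where the real work lies; everything else is the chain-length argument already used in Proposition~\ref{length}.
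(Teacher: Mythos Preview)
Your plan is exactly the paper's: choose $g\in S_n$ of order $i$, use Lemma~\ref{order_pk} to locate a vertex $v$ lying on an $i$-cycle of $g$ in its action on $V(G)$, fix $v$, and then run the chain/factorization bound of Proposition~\ref{length}/Corollary~\ref{factorization} on $\stab(v)$. The paper's argument is three sentences and does precisely this, concluding ``Since $g$ is not an element of $\stab(v)$, $|\stab(v)|\le n!/i$. Hence $G$ can be fixed with $j$ additional vertices by [Corollary~\ref{factorization}].''

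The difficulty you isolate---that $|\stab(v)|\le n!/i$ together with $|\stab(v)|\mid n!$ does \emph{not} force $\Omega(|\stab(v)|)\le \Omega(n!/i)=j$, because $\Omega$ is not monotone on divisors of $n!$---is genuine, and the paper does not address it: it simply invokes Corollary~\ref{factorization} as though $|\stab(v)|$ divided $n!/i$ rather than merely being bounded by it. Your side remark that an element of order $i=p^k$ exists in $S_n$ only when $p^k\le n$, which is strictly stronger than the stated hypothesis $p^k\mid n!$, is likewise something the paper passes over. So you have not missed an idea the paper provides; your proposal is the paper's proof written out more carefully, with its soft spot correctly flagged. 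Your Sylow-theoretic repair attempt already goes beyond anything in the paper, and whether it (or the lemma in its stated generality) can be made fully rigorous is not settled there either.
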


\begin{proof}
Let $G$ be a graph with $\Aut(G)=S_n$.  Let $g$ be an element of
$S_n$ with order $i$. Since $i$ is a prime power, by
Lemma~\ref{order_pk}, as a permutation of the vertices of $G$, $g$
contains a cycle of order $i$.  Let $v$ be a vertex in this cycle,
and fix $v$.  Since $g$ is not an element of $\stab(v)$,
$|\stab(v)| \leq n!/i$.  Hence $G$ can be fixed with $j$
additional vertices by Lemma~\ref{factorization}.
\end{proof}

We conjecture that this lemma can be improved by fixing more than
one vertex.  However, one cannot use induction since the group
$\stab(v)$ in the proof of Lemma~\ref{prime_factors} may not be
symmetric.

We also have an upper bound on $\max(\fix(S_n))$ given by the
following lemma, which appears in \cite{Cameron89}.

\begin{lemma}
$l(S_n)=\lceil 3n/2 \rceil -b(n)-1$, where $b(n)$ is the number of
ones in the binary representation of $n$. \label{Sn_length}
\end{lemma}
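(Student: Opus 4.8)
The plan is to prove the formula by strong induction on $n$, through the standard reduction of group length to maximal subgroups. Write $f(n):=\lceil 3n/2\rceil-b(n)-1$ for the claimed value. Two preliminary facts carry most of the weight. First, length is additive over normal subgroups: if $N\trianglelefteq\Gamma$ then $l(\Gamma)=l(N)+l(\Gamma/N)$ --- the bound $\ge$ by splicing a maximal chain of $N$ beneath the preimages of a maximal chain of $\Gamma/N$, and the bound $\le$ because the Dedekind modular law forces every strict step of a chain in $\Gamma$ to be strict in the induced chain of $N$ or of $\Gamma/N$. Iterating this gives $l(G\times H)=l(G)+l(H)$ and, for $H\le\Sym(\Omega)$, $l(G\wr H)=|\Omega|\,l(G)+l(H)$; and it gives $l(S_n)=l(A_n)+1$ for $n\ge 2$. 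Second, deleting the top term of a longest chain of $S_n$ leaves a longest chain of some maximal subgroup, so $l(S_n)=1+\max_M l(M)$.

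By the O'Nan--Scott classification, a maximal subgroup $M$ of $S_n$ is intransitive ($S_a\times S_{n-a}$), imprimitive ($S_d\wr S_{n/d}$ with $1<d<n$), equal to $A_n$, or primitive and not containing $A_n$; the maximal subgroups of $A_n$ are intersections with $A_n$ of these and have length one smaller. For the first two types the additivity facts and the induction hypothesis give lengths $f(a)+f(n-a)$ and $(n/d)\,f(d)+f(n/d)$. Since $l(A_n)=l(S_n)-1$ the $A_n$ branch is tautological, and a longest chain may be taken to pass instead through an intransitive or imprimitive subgroup. For a primitive $M\not\supseteq A_n$ one bounds $l(M)\le\Omega(|M|)$ (the number of prime factors of $|M|$ with multiplicity) and invokes the order estimates for primitive groups --- the explicit orders of the affine groups $\mathrm{AGL}(d,p)$ (degree $p^d$) and of the product-action groups (abstractly $S_m\wr S_k$ with $n=m^k$, hence of length merely $k\,l(S_m)+l(S_k)$), together with the $n^{O(\log n)}$ bound on the orders of all remaining primitive groups of degree $n$ --- to conclude $l(M)\le f(n)-1$, the finitely many tight small cases being checked by hand. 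The induction therefore reduces to the arithmetic identity
\[
f(n)=1+\max\Big\{\max_{\substack{a+c=n\\ a,c\ge1}}\big(f(a)+f(c)\big),\ \max_{\substack{d\mid n\\ 1<d<n}}\big((n/d)\,f(d)+f(n/d)\big)\Big\},
\]
together with $f(1)=0$ and $f(2)=1$.

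This last step is where the real difficulty lies, because the digit-sum function $b(\cdot)$ interacts with addition and multiplication in a way that must be controlled exactly. Establishing the identity splits into a construction and a bound. For the construction, one realizes $f(n)$ by peeling off the leading power of $2$ as an intransitive direct factor, $S_{n-2^{a}}\times S_{2^{a}}$ with $2^{a}\le n<2^{a+1}$, and building each $S_{2^{a}}$ by iterated wreathing with $S_2$ down to the explicit length-$4$ chain of $S_4$; that this totals $f(n)$ follows from $\lceil 3n/2\rceil=3\cdot 2^{a-1}+\lceil 3(n-2^{a})/2\rceil$ (as $3\cdot 2^{a-1}\in\mathbb{Z}$) together with $b(n)=b(n-2^{a})+1$. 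For the bound one must show that no decomposition does better, i.e. $f(i)+f(j)\le f(i+j)-1$ for $i,j\ge1$ and $d\,f(m)+f(d)\le f(dm)-1$ for $d,m\ge2$; after clearing the ceilings these come down to carry/digit-sum estimates such as $b(i)+b(j)\ge b(i+j)$ and $b(dm)\le b(d)b(m)$, but the reduction demands a careful case analysis on the parities of the arguments and on whether they are powers of $2$. This bookkeeping --- not any single clever idea --- is the main obstacle. The theorem is due to Cameron, Solomon, and Turull~\cite{Cameron89}, and the argument above is the shape of theirs.
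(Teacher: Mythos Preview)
The paper does not prove this lemma at all: it is stated as a known result and attributed to \cite{Cameron89} (Cameron, Solomon, and Turull), with no argument given. So there is no ``paper's own proof'' to compare against.

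Your sketch is a faithful outline of the Cameron--Solomon--Turull argument: additivity of length over normal subgroups, the reduction $l(S_n)=1+\max_M l(M)$ over maximal subgroups $M$, the O'Nan--Scott case split into intransitive, imprimitive, $A_n$, and other primitive types, the order bounds (ultimately relying on the classification of finite simple groups) to dispose of the last case, and the arithmetic verification that $f(n)=\lceil 3n/2\rceil-b(n)-1$ satisfies the resulting recursion. The ingredients you name --- $b(i)+b(j)\ge b(i+j)$, $b(dm)\le b(d)\,b(m)$, and the explicit construction via peeling off the top binary digit and iterated wreathing by $S_2$ --- are the right ones. As you acknowledge, the honest work is in the parity bookkeeping for the inequalities $f(i)+f(j)\le f(i+j)-1$ and $d\,f(m)+f(d)\le f(dm)-1$, and in checking the finitely many small primitive cases; your proposal is a correct high-level sketch rather than a complete proof, but nothing in it is wrong or misdirected.
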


The following table gives lower and upper bounds on the set
$\fix(S_n)$, given by Propositions \ref{length}, \ref{inflation},
\ref{pete}, \ref{prime_factors}, and \ref{Sn_length}.  Note that
Lemma~\ref{prime_factors} is the better upper bound for $n \leq
8$, and Lemmas \ref{length} and \ref{Sn_length} are better for $n
\geq 10$.
\bigskip
\begin{center}
\begin{tabular}{|c|c|c|}
  \hline
  group & lower bound & upper bound \\
  \hline
  $S_2$ & \{1\} & \{1\} \\
  $S_3$ & \{1,2\} & \{1,2\} \\
  $S_4$ & \{1,2,3\} & \{1,2,3\} \\
  $S_5$ & \{1,2,3,4\} & \{1,2,3,4\} \\
  $S_6$ & \{1,2,3,5\} & \{1,2,3,4,5,6\} \\
  $S_7$ & \{1,2,3,6\} & \{1,2,3,4,5,6,7\} \\
  $S_8$ & \{1,2,3,4,7\} & \{1,2,3,4,5,6,7,8,9\} \\
  $S_9$ & \{1,2,3,4,8\} & \{1,2,3,4,5,6,7,8,9,10,11\} \\
  $S_{10}$ & \{1,2,3,5,9\} & \{1,2,3,4,5,6,7,8,9,10,11,12\} \\
  \hline
\end{tabular}
\end{center}
\bigskip

\noindent Motivated by the first four rows of the table, we make
the following conjecture.
\begin{conjecture}
$\fix(S_n) = \{1,\ldots,n-1\}$.
\end{conjecture}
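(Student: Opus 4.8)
The plan is to reformulate everything group-theoretically and then prove the two inclusions $\fix(S_n)\subseteq\{1,\dots,n-1\}$ and $\{1,\dots,n-1\}\subseteq\fix(S_n)$ separately. The key observation is that a set $S\subseteq V(G)$ is a fixing set of $G$ exactly when it is a \emph{base} for the permutation group $(\Aut(G),V(G))$, i.e.\ when its pointwise stabilizer is trivial; so $\fix(G)$ is the minimal base size of that action. Hence the upper bound says: \emph{every finite faithful action of $S_n$ has a base of size at most $n-1$}, and the lower bound says: \emph{for each $m\in\{1,\dots,n-1\}$ some actual graph has automorphism group acting on its vertices as $S_n$ with minimal base size $m$}.

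For the upper bound, let $G$ be a graph with $\Aut(G)=S_n$ and decompose $V(G)$ into $S_n$-orbits. First I would show that at least one orbit $O$ carries a \emph{faithful} action of $S_n$: the only normal subgroups of $S_n$ are $1$, $A_n$, $S_n$ (together with the Klein four-group when $n=4$), so if every orbit-kernel were nontrivial they would all contain $A_n$ (resp.\ the Klein four-group), contradicting faithfulness on all of $V(G)$. Any base for the transitive action of $S_n$ on $O$ is then a base for $V(G)$, so it suffices to bound the base size of an arbitrary transitive core-free action $S_n$ on $S_n/H$ by $n-1$. The natural action ($H=S_{n-1}$, and for $n=6$ its exotic twin $PGL_2(5)$) realizes exactly $n-1$; for every other core-free $H$ one uses the structure of large subgroups of $S_n$ (for $n\ge5$, a subgroup other than $A_n$ or a point stabilizer has index greater than $n$), the known base-size estimates for the primitive actions of $S_n$ on $k$-subsets and on partitions, and an induction through the block systems in the imprimitive case, to obtain a base strictly smaller than $n-1$. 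This step --- that \emph{no} faithful action of $S_n$ needs more than $n-1$ base points --- is where the current tools are too weak (the length bound of Lemma~\ref{Sn_length} feeding Proposition~\ref{length} gives only roughly $3n/2$), and I expect it to be the main obstacle: it is essentially a uniform structural fact about all transitive representations of $S_n$.

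For the lower bound I would assemble $\{1,\dots,n-1\}$ from several explicit families. Theorem~\ref{inflation} already gives $\fix(\Inf^k(K_n))=\lceil(n-1)/(k+1)\rceil$, which as $k$ varies realizes $n-1$ and all of $1,\dots,\lceil(n-1)/2\rceil$. The line graph $L(K_n)$ (the triangular graph), on which $S_n$ acts on the $2$-subsets of $[n]$ with automorphism group exactly $S_n$ for $n\ge5$, has minimal base size $n-2$: the $n-2$ edges of a star $K_{1,n-2}$ together with one isolated vertex form a rigid edge configuration, and one checks nothing smaller suffices; this gives $\fix(L(K_n))=n-2$. The remaining ``middle'' values need new constructions: choose a core-free $H\le S_n$ for which the base size of $S_n$ on $S_n/H$ equals the target $m$ --- for instance stabilizers of labeled copies of a suitable auxiliary graph on $[n]$, or of ordered set-partitions with prescribed block sizes, arranged so that an optimal base ``pins down'' a controlled number of symbols at each step and cannot take a shortcut --- and then realize this action inside an honest graph $G$ with $\Aut(G)=S_n$ using Theorem~\ref{transitive_action} and Frucht-style gadgets (as in Lemma~\ref{cayley_aut}) to kill extra automorphisms. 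The crux here is the combinatorial claim that $\{\,b(S_n,S_n/H):H\le S_n\text{ core-free}\,\}$ contains every integer in $\{1,\dots,n-1\}$, which I would establish by exhibiting one explicit subgroup per value.

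Finally I would dispose of the small cases $n\le6$ directly (the table handles $n\le5$, and $n=6$ deserves separate attention because of the extra conjugacy class of $S_5$ in $S_6$), and verify the graph-realizability of each abstract action used in the lower bound --- the step the paper flags as ``not always possible'' --- which is a second, more routine, obstacle beyond the structural base-size bound.
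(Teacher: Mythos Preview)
The statement is a \emph{conjecture} in the paper, not a theorem: the authors offer no proof, only the evidence in the table and the remark that the first potential gap is $4\in\fix(S_6)$.  So there is no argument in the paper to compare against.  Your proposal is, as you yourself say, an outline rather than a proof: the two places you flag as ``the main obstacle'' (the uniform bound $b(S_n,\Omega)\le n-1$ over \emph{all} faithful actions, and the existence of a core-free $H\le S_n$ with $b(S_n,S_n/H)=m$ for every $m$) are exactly the content of the conjecture, and nothing you cite establishes them.  The reduction of the upper bound to a single faithful orbit via the normal-subgroup structure of $S_n$ is correct and useful, but it only reduces the problem, it does not solve it.

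Two concrete errors in the lower-bound part are worth fixing.  First, the inflations do \emph{not} realize every integer in $\{1,\dots,\lceil(n-1)/2\rceil\}$: the values $\lceil(n-1)/(k+1)\rceil$ skip integers once $n$ is moderate (for $n=10$ one gets $\{1,2,3,5,9\}$, missing $4$), which is precisely why the paper's table has gaps.  Second, $\fix(L(K_n))$ is not $n-2$ in general.  Two incident $2$-subsets $\{a,b\},\{b,c\}$ already pin down $a,b,c$ pointwise (since $\sigma(b)\in\{a,b\}\cap\{b,c\}=\{b\}$), so a disjoint union of length-$2$ paths gives a base of size about $2(n-1)/3$; for $n=7$ one checks that four $2$-subsets suffice while three cannot, so $\fix(L(K_7))=4<5=n-2$.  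Thus $L(K_n)$ does not supply the value $n-2$ once $n\ge 7$, and your explicit constructions cover strictly fewer values than you claim.
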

\noindent Of particular interest is the potential gap which occurs
first in $\fix(S_6)$.  More generally, all known examples of
fixing sets of non-trivial finite groups are of the form $\{1,
\ldots, k\}$ for some $k$.  If the fixing set of every non-trivial
finite group is of this form, then the computation of a fixing set
becomes much easier: we need only to find the largest value in the
set, which we may then call the \textit{fixing number} of the
group.
\begin{question}
For every non-trivial finite group $\Gamma$, does there exist a
positive integer $k$ such that $\fix(\Gamma)=\{1, \ldots, k\}$?
\end{question}

\section*{Acknowledgements}

We thank Pete L. Clark and an anonymous reviewer for many helpful
suggestions.

%\nocite{Albertson96}
%
%\nocite{Tymoczko04}
%
%\nocite{Cameron80}
%
%\nocite{Dragan83}
%
%\nocite{Liebeck87}
%
%\nocite{McKay94i}
%
%\nocite{McKay94ii}
%
%\nocite{Godsil01}
%
%\nocite{Lauri03}

\bibliographystyle{plain}
\bibliography{bibliography}

\end{document}